\documentclass[a4paper,11pt]{article} 

\usepackage{hyperref}
\usepackage[affil-sl]{authblk}


\usepackage{titlesec}
\titleformat{\section}
  {\Large\center\bfseries\scshape}
  {\thesection.}{.7em}{}
\titlespacing*{\section}{0pt}{3.5ex plus 0ex minus 0ex}{1.5ex plus 0ex}

\titleformat{\subsection}
  {\center\bfseries\scshape}
  {\thesubsection.}{.7em}{}
\titlespacing*{\subsection}{0pt}{3.5ex plus 0ex minus 0ex}{1.5ex plus 0ex}


\usepackage[OT2, T1]{fontenc}
\usepackage[russian,english]{babel}
\usepackage{amsfonts}
\usepackage{mathrsfs}
\usepackage{bbm}
\usepackage{latexsym}
\usepackage{amssymb}
\usepackage{amsmath}

\newcommand\blfootnote[1]{%
  \begingroup
  \renewcommand\thefootnote{}\footnote{#1}%
  \addtocounter{footnote}{-1}%
  \endgroup
}
\interfootnotelinepenalty=10000

\usepackage{epigraph}
\setlength\epigraphwidth{0.65 \textwidth}
\setlength\epigraphrule{0pt}

\usepackage{enumitem}
\setlist{nolistsep}
\usepackage{amsthm}
\usepackage[capitalize]{cleveref}

\newtheoremstyle{plain}
{3mm}	
{3mm}	
{\slshape}	
{}	
{\color{Blue}\bfseries}	
{.}	
{.5em}	
{}	
\newtheoremstyle{definition}
{2mm}
{2mm}
{}
{}
{\color{Blue}\bfseries}
{.}
{.5em}
{}
\theoremstyle{plain}

\newtheorem{Theorem}{Theorem}
\newtheorem{Question}{Question}

\newtheorem{Lemma}[Theorem]{Lemma}
\newtheorem{Proposition}[Theorem]{Proposition}
\newtheorem{Corollary}[Theorem]{Corollary}
\theoremstyle{definition}

\theoremstyle{plain} 
\newcounter{MainTheoremCounter}

\theoremstyle{plain}
\newtheorem*{namedthm}{\namedthmname}
\newcounter{namedthm}
\makeatletter
	
\makeatother

\numberwithin{equation}{section}

\usepackage{xcolor}

\definecolor{Scarlet}{rgb}{0.78, 0.11, 0.0}
\definecolor{Blue}{rgb}{0.0, 0.0, 0.0}
\definecolor{Green}{rgb}{0.39, 0.71 ,0.0}

\hypersetup{citecolor = Scarlet,colorlinks,
			linkcolor = black,
			urlcolor = Scarlet}

\newcommand{\N}{\mathbb{N}}
\newcommand{\Z}{\mathbb{Z}}
\newcommand{\R}{\mathbb{R}}
\newcommand{\Q}{\mathbb{Q}}

\newcommand{\define}[1]{\textcolor{Blue}{\emph{#1}}}


\renewcommand{\epsilon}{\varepsilon}
\renewcommand{\leq}{\leqslant}
\renewcommand{\geq}{\geqslant}

\renewcommand{\setminus}{\backslash}



\newcommand{\1}{1}


\newcommand{\Hardy}{\mathcal{H}}
\newcommand{\LE}{\mathcal{L}}


\begin{document}

\title{On the density of coprime tuples of the form $(n,\lfloor f_1(n)\rfloor,\ldots,\lfloor f_k(n)\rfloor)$,  where $f_1,\ldots,f_k$ are functions from a Hardy field.
}
\author{Vitaly Bergelson and Florian Karl Richter\\
  {\small\href{mailto:bergelson.1@osu.edu}{bergelson.1@osu.edu}}
  and
  {\small\href{mailto:richter.109@osu.edu}{richter.109@osu.edu}}}
\affil{\small Department of Mathematics\\
  The Ohio State University\\
  Columbus, Ohio
  }
\date{\small \today}
\maketitle
\begin{abstract}
Let $k\in\N$ and let $f_1,\ldots,f_k$ belong to a Hardy field. We prove that under some natural conditions on the $k$-tuple $(f_1,\ldots,f_k)$ the density of the set
\begin{equation*}
\big\{n\in \N: \gcd(n,\lfloor f_1(n)\rfloor,\ldots,\lfloor f_k(n)\rfloor)=1\big\}
\end{equation*}
exists and equals $\frac{1}{\zeta(k+1)}$, where $\zeta$ is the Riemann zeta function.
\end{abstract}
\tableofcontents

\blfootnote{The first author gratefully acknowledges the support of the NSF under grant DMS-1500575.}

\section{Introduciton}
\label{sec:intro}

\epigraph{\textit{
``It\hfill~is\hfill~a\hfill~well-known\hfill~theorem\hfill~of\hfill~{\v C}eby{\v s}ev\footnotemark{}\hfill~that\hfill~the\hfill~probability\hfill~of\hfill~the\hfill~relation\hfill~$\gcd(n, m) = 1$\hfill~is\hfill~$\frac{6}{\pi^2}$.
One\hfill~can\hfill~expect\hfill~this\hfill~still\hfill~to\hfill~remain\hfill~true\hfill~if\hfill~$m = g(n)$\hfill~is\hfill~a\hfill~function\hfill~of\hfill~$n$,\hfill~provided\hfill~that\hfill~$g(n)$\hfill~does\hfill~not\hfill~preserve\hfill~arithmetic\hfill~properties\hfill~of\hfill~$n$.''}}{P. Erd{\H o}s and G. Lorentz}

\footnotetext{
The attribution of this result to {\v C}eby{\v s}ev
(\foreignlanguage{russian}{Chebysh{\"e}v}) seems not to be justified; see however the very interesting recent preprint \cite{2016arXiv160805435A} where {\v C}eby{\v s}ev's role in the popularization of this theorem is traced and analyzed.
The result itself goes back to Dirichlet (see {\cite[pp. 51 -- 66]{Dirichlet97}}
where the equivalent statement $\sum_{n=1}^N\phi(n)\sim \frac{3}{\pi^2} n^2$ is proven)
and was rediscovered multiple times -- see for example \cite{Mertens74,Cesaro81,Cesaro83,Sylvester09, Sylvester12}.
It is worth noting that it was Ces{\` a}ro who formulated this result in probabilistic terms
\cite{Cesaro81}
and also gave a probabilistic, though not totally rigorous, proof in \cite{Cesaro83}.}

The above epigraph is a quote from the introduction to a paper by Erd{\H o}s and Lorentz
\cite{EL58}, which establishes sufficient conditions for a differentiable function
$f:[1,\infty)\to\R$ of sub-linear growth to satisfy
\begin{equation}
\label{eqn:watson}\tag{1}
d\big(\big\{n\in\N: \gcd(n,\lfloor f(n)\rfloor)=1\big\}\big) ~=~ \frac{6}{\pi^2};
\end{equation}
here $d(A)$
denotes the natural density of a set $A\subset \N$.

Perhaps the earliest result of this kind is due to Watson \cite{Watson53}, who showed
that \eqref{eqn:watson} holds for $f(n)=n\alpha$, where $\alpha$
is an irrational number (see also \cite{Estermann53,Spilker00}).
Other examples of functions for which
\eqref{eqn:watson} holds are
$f(n)=n^c$,
where $c>0$, $c\notin\N$, (see \cite{LM55} for the case
$0<c<1$ and \cite{DD02} for
the general case)
and
$f(n)=\log^r(n)$ for all $r>1$ (see \cite{LM55} for the case $r>2$
and \cite{EL58} for the general case).

The purpose of this paper is to establish \eqref{eqn:watson}
for a large class of smooth functions that naturally includes
examples such as $f(n)=n^c$ or $f(n)=\log^r(n)$; this is the
class of functions belonging to a Hardy field.

Let $G$ denote the set of all germs\footnote{We define a \define{germ
at $\infty$} to be any equivalence class of functions
under the equivalence relationship
$(f\sim g) \Leftrightarrow \big(\exists t_0>0
~\text{such that}~f(t)=g(t)~\text{for all}~t\in [t_0,\infty)\big)$.} at $\infty$ of real valued functions defined
on the half-line $[1,\infty)$.
Note that $G$ forms a ring under
pointwise addition and multiplication, which we denote by
$(G,+,\cdot)$.
Any subfield of the ring $(G,+,\cdot)$ that is closed under
differentiation is called a
\define{Hardy field}. 
By abuse of language, we say that a function $f:[1,\infty)\to\R$
belongs to some Hardy field $\Hardy$ (and write $f\in\Hardy$)
if its germ at $\infty$ belongs to $\Hardy$.
See \cite{Boshernitzan81,Boshernitzan82,Boshernitzan94}
and some references therein for
more information on Hardy fields.

A classical example of a Hardy field is the class of
{logarithmico-exponential functions}\footnote{By a
\define{logarithmico-exponential function}
we mean any function $f:(0,\infty)\to\R$ that can be
obtained from constants, $\log(t)$ and $\exp(t)$
using the standard arithmetical operations $+$, $-$, $\cdot$, $\div$ and
the operation of composition.}
introduced by Hardy in \cite{Hardy12,Hardy10};
we denote this class by $\LE$. It is worth noting that for any
Hardy field $\Hardy$ there exists a
Hardy field $\Hardy'$ such that $\Hardy'\supset \LE\cup \Hardy$.

If $\Hardy$ is a Hardy field, then one has the following basic properties:
\begin{itemize}
\item
If $f\in\Hardy$ then
$\lim_{t\to\infty}f(t)$ exists (as an element in
$\R\cup\{-\infty,\infty\}$);
\item
Any non-constant $f\in\Hardy$ is eventually either strictly
increasing or strictly decreasing; any non-linear
$f\in\Hardy$ is eventually either strictly concave or strictly convex.
\item
If $f\in\Hardy$, $g\in\LE$ and $\lim_{t\to\infty}g(t)=\infty$
then there exists a Hardy field $\Hardy'$ containing $f(g(t))$.
\item
If $f\in\Hardy$, $g\in\LE$ and $\lim_{t\to\infty}f(t)=\infty$
then there exists a Hardy field $\Hardy'$ containing $g(f(t))$.
\end{itemize}
Some well known examples of functions coming from
Hardy fields are:
$$
t^c~(\forall c\in\R),~
\log(t),~
\exp(t),~
\Gamma(t),~
\zeta(t),~
\text{Li}(t),~
\sin\left(\frac{1}{t}\right),~
etc.
$$

Before formulating our main results,
we introduce some convenient notation.
We use $\log_n(t)$ to abbreviate the $n$-th iteration of
logarithms, that is, 
$\log_2(t)=\log\log(t)$, $\log_3(t)=\log\log\log(t)$ and so on.
Also, given two functions $f,g:[1,\infty)\to\R$ we will write $f(t)\prec g(t)$
if $\frac{g(t)}{f(t)}\to\infty$ as $t\to\infty$.

Let $\Hardy$ be a Hardy field and let $f\in\Hardy$.
Consider the following two conditions:
\\

\begin{enumerate}
[label=($\text{\Alph{enumi}}$),
ref=$(\text{\Alph{enumi}})$,leftmargin=*]
\item\label{condition:A}
$
\log(t)\log_4(t)\prec f(t);
$
\item\label{condition:B}
There exists $j\in\N$ such that
$
t^{j-1} \prec f(t)\prec t^{j}.
$
\end{enumerate}
\

We have the following theorem.

\begin{Theorem}
\label{thm:mainA}
Let $\Hardy$ be a Hardy field and assume
that $f\in\Hardy$ satisfies
conditions \ref{condition:A} and \ref{condition:B}.
Then the natural density of the set
\begin{equation*}
\big\{n\in \N: \gcd(n,\lfloor f(n)\rfloor)=1\big\}
\end{equation*}
exists and equals $\frac{6}{\pi^2}$.
\end{Theorem}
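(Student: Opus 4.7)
The plan is to combine Möbius inversion with Hardy-field equidistribution. Using the identity $\1_{\gcd(a,b)=1}=\sum_{d\mid a,\,d\mid b}\mu(d)$, write
\[
S(N):=\bigl|\{n\leq N:\gcd(n,\lfloor f(n)\rfloor)=1\}\bigr|=\sum_{d=1}^{N}\mu(d)\,A_d(N),\qquad A_d(N):=\bigl|\{n\leq N:d\mid n,\; d\mid\lfloor f(n)\rfloor\}\bigr|.
\]
A short check shows that $d\mid\lfloor f(n)\rfloor$ is equivalent to $\{f(n)/d\}\in[0,1/d)$, so setting $n=dm$ gives
\[
A_d(N)=\bigl|\{m\leq N/d:\{f(dm)/d\}\in[0,1/d)\}\bigr|.
\]

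The key analytic step is to show, for each fixed $d\in\N$, that the sequence $(f(dm)/d)_{m\geq 1}$ is equidistributed modulo $1$, which would yield
\[
A_d(N)=\frac{N}{d^{2}}+o_d(N)\qquad\text{as }N\to\infty.
\]
This is precisely where the hypotheses \ref{condition:A} and \ref{condition:B} enter: \ref{condition:B} prevents $f$ from approximating an integer-degree polynomial (the obstruction to equidistribution along arithmetic progressions), while \ref{condition:A} furnishes the lower bound on growth needed for Weyl-sum type estimates. Together they place the rescaled function $m\mapsto f(dm)/d$ squarely within the scope of the Hardy-field equidistribution machinery developed by Boshernitzan.

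To pass from the per-$d$ asymptotics to the global statement $S(N)/N\to 6/\pi^{2}$, I would truncate the Möbius sum at a slowly growing cutoff $D=D(N)\to\infty$. The small-$d$ block contributes $N\sum_{d\leq D}\mu(d)/d^{2}+o(N)\to (6/\pi^{2})N$. For the tail, I would deploy a Bonferroni/sieve bound: restricting the summation to $d$ composed solely of primes $\leq P$ sandwiches $S(N)/N$ between quantities tending to $\prod_{p\leq P}(1-1/p^{2})\pm o_{P}(1)$, and letting $P\to\infty$ after $N\to\infty$ pins down $6/\pi^{2}$.

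The principal obstacle will be Step 2 with sufficient uniformity in $d$: standard Hardy-field equidistribution theorems are formulated for a single function, whereas here one needs equidistribution for the whole family $m\mapsto f(dm)/d$ indexed by $d$, with quantitative error terms that remain summable across a growing range $d\leq D$. Verifying that conditions \ref{condition:A} and \ref{condition:B} are preserved under the operation $f(t)\mapsto f(dt)/d$ in a fashion compatible with the truncation parameter $D$ should constitute the technical heart of the argument.
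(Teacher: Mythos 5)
Your skeleton — M\"obius inversion, per-$d$ equidistribution via Hardy-field machinery, truncation at a growing cutoff — matches the paper's overall strategy, and you correctly identify uniformity in $d$ as the technical crux. But the proposal does not supply what is needed to close that gap, and the tail estimate as you describe it actually fails.

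The concrete problem is in the sandwich step. Restricting inclusion--exclusion to primes $\leq P$ gives an upper bound for $S(N)$; for the matching lower bound you must show that
\[
\frac{1}{N}\sum_{p>P}\bigl|\{n\leq N:\ p\mid n,\ p\mid\lfloor f(n)\rfloor\}\bigr|\ \longrightarrow\ 0
\]
as $N\to\infty$ and then $P\to\infty$. The sieve bound coming from $p\mid n$ alone is $N/p$, but the relevant primes run up to roughly $f(N)$, and by Mertens $\sum_{P<p\leq f(N)}1/p\sim\log\log f(N)-\log\log P$ \emph{diverges} as $N\to\infty$ for every fixed $P$. So the crude bound does not make the two sides of the sandwich meet; you need genuinely finer control of $\bigl|\{n\leq N: p\mid\gcd(n,\lfloor f(n)\rfloor)\}\bigr|$ across a huge range of primes.

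The paper supplies that control with two different mechanisms, depending on the growth of $f$, and your plan anticipates neither. When $f(t)\gg t/\log_2(t)$ it proves a \emph{quantitative} discrepancy estimate: for $d$ up to about $f(N)/\log^5(N)$,
\[
\Bigl|\,\bigl|\{n\leq N:\ d\mid\gcd(n,\lfloor f(n)\rfloor)\}\bigr|-\tfrac{N}{d^{2}}\,\Bigr|\ \ll\ \frac{N}{d\,\log_2^2(N)},
\]
obtained by combining van der Corput derivative estimates for the exponential sums $\sum_{n}e\bigl(\tfrac{\tau}{d} f(dn)\bigr)$ with the Erd\H{o}s--Tur\'an--Koksma inequality. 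The extra factor $\log_2^{-2}(N)$ is precisely what makes $\frac{1}{\log_2^2 N}\sum_{p\leq f(N)}\frac1p\ll\frac1{\log_2 N}\to 0$ and kills the medium-prime tail. When $f(t)\prec t/\log_2(t)$, van der Corput degenerates and the paper instead runs an Erd\H{o}s--Lorentz-style elementary argument: it partitions $[1,N]$ into the intervals $I_m=\{t:\ f(t)\in[m,m+1)\}$, uses $|I_m|\approx 1/f'(\xi_m)$ coming from the Hardy-field differential inequalities, and bounds each prime's contribution interval by interval. Your condition-(A) lower bound is used exactly here, not in a Weyl-sum estimate. In short, the per-$d$ qualitative equidistribution you invoke gives the main term, but the step you flag as ``the technical heart of the argument'' is in fact the entire content of Sections 2--5 of the paper and remains open in your plan.
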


Examples of sequences
$(f(n))_{n\in\N}$ to which \cref{thm:mainA} applies are
$n^c~(\text{with}~ c\notin\N)$,
$\log^2(n)$,
$n^{\sqrt{3}}\log(n)$,
$\frac{n}{\log_2(n)}$,
$\log(n!)$,
$\text{Li}(n)$,
$\log(|B_{2n}|)$
(where $B_{n}$ denotes the $n$-th Bernoulli number),
and many more.

We remark that condition \ref{condition:A}
is sharp.
Indeed, it
is shown in \cite[Section 3]{EL58} that \cref{thm:mainA}
does not hold for the function
$f(t)=\log(t)\log_4(t)$, as well as for many other
functions that grow slower than $\log(t)\log_4(t)$.

As for condition \ref{condition:B}, it can
perhaps be replaced by
the following:
\\

\begin{enumerate}
[label=($\text{\Alph{enumi}}'$),
ref=$(\text{\Alph{enumi}}')$,leftmargin=*]
\setcounter{enumi}{1}
\item\label{condition:B-prime}
There exists $j\in\N$ such that $f(t)\prec t^j$
and for all polynomials $p(t)\in\Q[t]$ we have $|f(t)-p(t)|\succ \log(t)$.
\end{enumerate}
\

Condition \ref{condition:B-prime} is inspired by a theorem of Boshernitzan (cf. \cite[Theorem 1.3]{Boshernitzan94}).
However, proving \cref{thm:mainA} under conditions
\ref{condition:A} and \ref{condition:B-prime}
would certainly necessitate introduction of new ideas.

We actually prove a multi-dimensional generalization of \cref{thm:mainA}.
Let $\Hardy$ be a Hardy field and assume $f_1,\ldots,f_k\in\Hardy$.
In addition to conditions \ref{condition:A} and \ref{condition:B}
consider the following:
\begin{enumerate}
[label=($\text{\Alph{enumi}}$),
ref=$(\text{\Alph{enumi}})$,leftmargin=*]
\setcounter{enumi}{2}
\item\label{condition:C}
$\frac{f_{i+1}}{f_{i}}\succ \log_2^4(t)$ for all $i=1,\ldots,k-1$.
\end{enumerate}
\

\begin{Theorem}
\label{thm:mainB}
Let $\Hardy$ be a Hardy field and assume $f_1,\ldots,f_k\in\Hardy$
satisfy conditions \ref{condition:A}, \ref{condition:B} and \ref{condition:C}.
Then the natural density of the set
\begin{equation*}
\big\{n\in \N: \gcd(n,\lfloor f_1(n)\rfloor,\ldots,\lfloor f_k(n)\rfloor)=1\big\}
\end{equation*}
exists and equals $\frac{1}{\zeta(k+1)}$, where $\zeta$ is the Riemann zeta function.
\end{Theorem}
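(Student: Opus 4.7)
The plan is to follow the classical Dirichlet--Erdős--Lorentz strategy: expand the indicator of coprimality via Möbius inversion and reduce the problem to joint equidistribution of the tuple $(f_1(n)/d,\ldots,f_k(n)/d)$ modulo $1$, uniformly in the modulus $d$. Using $\mathbf{1}_{\gcd=1}(m_0,\ldots,m_k)=\sum_{d\mid\gcd}\mu(d)$, and the elementary observation that $d\mid\lfloor f_i(n)\rfloor$ if and only if $\{f_i(n)/d\}<1/d$, we write
$$S(N):=\#\big\{n\le N:\gcd(n,\lfloor f_1(n)\rfloor,\ldots,\lfloor f_k(n)\rfloor)=1\big\}=\sum_{d=1}^{N}\mu(d)A_d(N),$$
where $A_d(N):=\#\{m\le N/d:\{f_i(dm)/d\}<1/d\text{ for all }i=1,\ldots,k\}$. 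Since $1/\zeta(k+1)=\sum_{d\ge 1}\mu(d)/d^{k+1}$, matching the target density splits into two subproblems: (i) a \emph{pointwise} limit $A_d(N)/N\to 1/d^{k+1}$ for each fixed $d$, and (ii) a \emph{tail} bound controlling $\sum_{d>D}|\mu(d)|A_d(N)$ uniformly for a suitable cutoff $D=D(N)\to\infty$.

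For (i), one studies the $k$-dimensional sequence $(f_1(dm)/d,\ldots,f_k(dm)/d)_{m\in\N}$ and, by Weyl's criterion, reduces equidistribution modulo $1$ to showing that for every $(h_1,\ldots,h_k)\in\Z^k\setminus\{0\}$,
$$\frac{1}{M}\sum_{m=1}^{M}\exp\!\Big(\frac{2\pi i}{d}\sum_{i=1}^{k}h_i f_i(dm)\Big)\longrightarrow 0\quad\text{as }M\to\infty.$$
Condition \ref{condition:C} guarantees that the highest-index nonzero term in the linear combination dominates, so the germ $g(t):=d^{-1}\sum_i h_i f_i(dt)$ lies (after passing to an enlarged Hardy field) in $\Hardy$ and inherits from the leading $f_{i_0}$ the growth sector imposed by \ref{condition:B}; condition \ref{condition:A} in turn keeps $g$ bounded away from the pathological ``too slowly growing'' regime. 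Thus $g$ falls in the class of Hardy-field germs for which the Boshernitzan-type equidistribution theorems apply, and the Weyl sum tends to zero. This yields $A_d(N)/N\to 1/d^{k+1}$ for every fixed $d$.

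The main obstacle is (ii): the trivial estimate $A_d(N)\le N/d$ (from $d\mid n$) is too weak, as it gives a divergent tail. The plan is to prove a quantitative version of (i) via the Erdős--Turán discrepancy inequality, which replaces $A_d(N)$ by a truncated sum of exponential sums with frequencies of size $O(\log d)$; these are then attacked by iterated van der Corput differencing $j$ times, where $j$ is given by \ref{condition:B}. At each differencing step one exploits the clean derivative asymptotic $g^{(\ell)}(t)\asymp t^{\alpha-\ell}$ with $\alpha\in(j-1,j)$, which is precisely what \ref{condition:B} ensures, to extract a power-saving factor; the resulting bound $A_d(N)\ll N d^{-1-\delta}$ remains effective for $d$ up to a small power of $N$. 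For the extreme range where $d$ is comparable to $N$, condition \ref{condition:A} is used directly: since $f_1(t)\succ\log(t)\log_4(t)$, the values $\lfloor f_1(n)\rfloor$ with $n\le N$ are sufficiently spread out that $d\mid\lfloor f_1(n)\rfloor$ is too rare to contribute. Combining the pointwise convergence from (i) with the uniform tail estimate from (ii) yields
$$\lim_{N\to\infty}\frac{S(N)}{N}=\sum_{d=1}^{\infty}\frac{\mu(d)}{d^{k+1}}=\frac{1}{\zeta(k+1)},$$
as desired. The whole argument hinges on the uniform-in-$d$ Weyl estimate, and this is where \ref{condition:A}, \ref{condition:B} and \ref{condition:C} are each used essentially: \ref{condition:C} to prevent cancellation among the $f_i$, \ref{condition:B} to run van der Corput cleanly, and \ref{condition:A} to handle the endpoint range of large $d$.
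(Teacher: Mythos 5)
Your outline for the ``fast-growth'' regime essentially matches the machinery in the paper: M\"obius inversion, Boshernitzan's equidistribution theorem for the pointwise limit $A_d(N)/N\to 1/d^{k+1}$, and a quantitative discrepancy estimate obtained from the Erd\H{o}s--Tur\'an--Koksma inequality together with van der Corput's $\ell$-th derivative tests, where condition~\ref{condition:C} guarantees a dominating term in the linear combination $\sum_i h_i f_i$. However, there is a genuine gap: the entire van der Corput step is only viable when the $f_i$ grow at least like a small power of $t$, more precisely when $\log_2(t)\prec\log(f_i(t))$ (this is hypothesis~\ref{roman2} in \cref{prop:1st-estimate} and \cref{cor:f-s-to-s}). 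Condition~\ref{condition:A} is far weaker --- it admits functions such as $f_1(t)=\log^2(t)$, which satisfy \ref{condition:A} and \ref{condition:B} but for which the relevant Weyl sums have nearly no curvature, and van der Corput's second-derivative test yields only an $O(M/\sqrt{\log M})$ type bound, nowhere near the $O(M/\log^2 M)$ needed for the discrepancy argument. Your remark that ``condition~\ref{condition:A} keeps $g$ bounded away from the pathological too-slowly-growing regime'' is precisely where the approach breaks down: \ref{condition:A} does not exclude that regime, and no amount of van der Corput differencing recovers a usable saving there.

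The paper resolves this by a case split which your proposal never makes. When $f_1(t)\gg t/\log_2(t)$, one automatically has $\log_2(t)\prec\log(f_1(t))$ and the van der Corput/discrepancy route (\cref{cor:mainQ}) goes through. When $f_1(t)\prec t/\log_2(t)$, van der Corput is abandoned entirely for the tail, which is instead controlled by an Erd\H{o}s--Lorentz style \emph{direct counting} argument (\cref{lem:p-t-2-EL58}): one counts integer points in the level sets $I_m=\{t:\lfloor f_1(t)\rfloor=m\}$ and bounds $\sum_{p>D}|\{n\le N:p\mid\gcd(n,\lfloor f_1(n)\rfloor)\}|$ using the inverse-function spacing $k_m\approx 1/f'(\xi_m)$. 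It is there, and only there, that condition~\ref{condition:A} is used essentially --- to bound the ``last interval'' contribution $\Sigma_3''$ via $\log_3(f(N))/f'(N)\prec N$. Your attribution of \ref{condition:A} to ``the endpoint range where $d$ is comparable to $N$'' is misplaced: in the slow-growth case $d$ never gets near $N$ (it is bounded by $E(N)\le f_1(N)$), and the real role of \ref{condition:A} is to keep $\lfloor f_1(n)\rfloor$ sufficiently dispersed. Finally, your claimed power saving $A_d(N)\ll Nd^{-1-\delta}$ is stronger than what the paper actually establishes: the discrepancy estimate in \cref{thm:homogeneous-discrepancy-estimate-hardy-sequences} gives only $|A_d(N)-N/d^{k+1}|\ll N/(d\log_2^2 N)$, a logarithmic saving, which is why the proof of \cref{thm:from-DD-and-VR-to-watson} sums only over divisors of a very slowly growing primorial $\Pi\le\log_2^2(N)$ rather than over all $d>D$.
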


We would like to remark that
our proof of \cref{thm:mainB}
works for (a larger
class of) functions which have sufficiently many
derivatives and possess some other natural regularity properties.
We decided in favor of dealing with Hardy fields since
they (a) provide an ample supply of interesting examples and
(b) allow for, so to say, cleaner proofs. 
\\

The structure of the paper is as follows.
In \cref{sec:diff-hardy} we prove some differential inequalities
for functions from a Hardy field; these inequalities will play a crucial role
in the later sections.
In \cref{sec:vdC} we briefly recall van der Corput's method for estimating
exponential sums.
In \cref{sec:estimating-exp-sums} we apply van der Corput's method to
derive useful estimates for exponential sums involving functions from
a Hardy field and in \cref{sec:discrepancy} we use
a higher dimensional version of the
Erd{\H o}s-Tur{\'a}n inequality
to convert these estimates into discrepancy estimates.
In \cref{sec:further-developing-DD}
we use the estimates derived in the previous sections to
give a proof of \cref{thm:mainB}.
Finally, in \cref{sec:f-e}, we formulate some natural
open questions.

\paragraph*{Acknowledgements:}
The authors would like to thank the anonymous referees for their helpful comments and Christian Elsholtz for the efficient handling of the submission process.
 
\section{Differential inequalities for functions from a Hardy field}
\label{sec:diff-hardy}

In this section we derive some differential inequalities for
functions belonging to a Hardy field.
Similar inequalities can be found in
\cite[Subsection 2.1]{Frantzikinakis09} and in
\cite[Subsection 2.1]{BKS15arXiv}.

Given two functions $f,g:[1,\infty)\to\R$ we write $f(t)\ll g(t)$
if there exist $C>0$ and $t_0\geq 1$ such that $f(t)\leq C g(t)$ for all $t\geq t_0$.
Also, for $\ell\in\N$ we use $f^{(\ell)}(t)$ to denote the $\ell$-th
derivative of $f(t)$.

The following lemma appears in
\cite{Frantzikinakis09}.

\begin{Proposition}[see {\cite[Corollary 2.3]{Frantzikinakis09}}]
\label{lem:Frantzikinakis-Cor2.3}
Let $\Hardy$ be a Hardy field.
Suppose $f\in\Hardy$ satisfies condition
\ref{condition:B}. Then for all $\ell\in\N$
we have,
$$
\frac{f(t)}{t^\ell
\log^2(t)} \prec |f^{(\ell)}(t)|
\ll \frac{f(t)}{t^\ell}.
$$
\end{Proposition}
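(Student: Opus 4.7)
The plan is to prove both bounds simultaneously by induction on $\ell$, working with the Hardy-field ratio $u_\ell(t) := t^\ell f^{(\ell)}(t)/f(t) \in \Hardy$; the conclusion is equivalent to showing $|u_\ell|$ is bounded (upper estimate) and $|u_\ell|\log^2(t)\to\infty$ (lower estimate).

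\emph{Base case $\ell = 1$.} Since $f \succ t^{j-1}$ with $j \geq 1$, one has $f(t) \to \infty$; after replacing $f$ by $-f$ if necessary, $f$ is eventually positive and increasing, so $u_1 = tf'/f > 0$. Being in $\Hardy$, $u_1$ has a limit $L \in [0,\infty]$. If $L > j$, then $(\log f)'(t) \geq (j+\epsilon)/t$ for some $\epsilon > 0$ and all large $t$, whence $f(t) \geq Ct^{j+\epsilon}$, contradicting $f \prec t^j$; so $L \leq j$ and $u_1$ is bounded. For the lower bound, $u_1\log^2(t) \in \Hardy$ has a limit; if this limit were finite then $u_1(t) \leq C/\log^2(t)$, and
\[
\log f(t) - \log f(T_0) = \int_{T_0}^t \frac{u_1(s)}{s}\,ds \leq \frac{C}{\log T_0},
\]
forcing $f$ to be bounded---contradicting $f\to\infty$. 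Hence $u_1\log^2(t) \to \infty$.

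\emph{Induction step (upper bound).} For $\ell \geq 2$ use the factorization
\[
u_\ell(t) = \prod_{k=1}^{\ell} r_k(t), \qquad r_k(t) := \frac{t f^{(k)}(t)}{f^{(k-1)}(t)} \in \Hardy,
\]
with the convention $f^{(0)} := f$. By the induction hypothesis together with condition \ref{condition:B}, each $g_k := f^{(k-1)}$ and its reciprocal are polynomially bounded (one deduces $t^{j-k}/\log^2(t) \prec |g_k| \prec t^{j-k+1}$). The base-case integration argument, applied to $g_k$ in place of $f$, shows that $r_k = tg_k'/g_k$ has a finite limit in $\Hardy$ (otherwise $|g_k|$ would grow or decay faster than every polynomial). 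Hence each $r_k$ is bounded, and so is the product $u_\ell$.

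\emph{Induction step (lower bound) --- the main obstacle.} A naive factor-by-factor application of the base-case lower bound costs a factor of $\log^2(t)$ at every step and produces only the much weaker estimate $f/(t^\ell\log^{2\ell}(t)) \prec |f^{(\ell)}|$. A single-pass argument is required instead: assume for contradiction that $u_\ell\log^2(t)\in\Hardy$ is bounded, so $|f^{(\ell)}(t)| \leq Cf(t)/(t^\ell\log^2(t))$. Using $f(t) \leq t^j$ from $f \prec t^j$ and integrating this inequality $\ell$ times, one deduces that $f$ agrees with a polynomial of degree $\leq \ell-1$ up to an error that is $o(t^{j-1}/\log t)$. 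Since every element of $\Hardy$ admits a well-defined asymptotic comparison with each power of $t$, this is incompatible with $t^{j-1}\prec f \prec t^j$ from condition \ref{condition:B}, giving the contradiction. The delicate point is performing the $\ell$-fold integration in one pass and extracting the contradiction from the polynomial trapping provided by \ref{condition:B}, rather than cascading bounds through intermediate derivatives.
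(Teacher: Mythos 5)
The paper does not prove \cref{lem:Frantzikinakis-Cor2.3}; it cites it from Frantzikinakis. Your base case $\ell=1$ is correct, and your upper-bound induction step via the factorization $u_\ell=\prod_{k=1}^\ell r_k$, with each $r_k=tf^{(k)}/f^{(k-1)}\in\Hardy$ having a finite limit because $|f^{(k-1)}|$ is trapped between two powers of $t$, is essentially right (if terse). The lower bound, however, has a real gap, and the ``single-pass'' integration you outline does not repair it.

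Concretely: from $|f^{(\ell)}(s)|\leq Cf(s)/(s^\ell\log^2 s)$ and $f(s)\leq s^j$, Taylor's formula with integral remainder gives $f(t)=P(t)+R(t)$ with $\deg P\leq\ell-1$ and
$$
|R(t)|\leq C\int_{T_0}^t\frac{(t-s)^{\ell-1}}{(\ell-1)!}\,\frac{s^{j-\ell}}{\log^2 s}\,ds \ll \frac{t^j}{\log^2 t}+t^{\ell-1},
$$
i.e.\ the error is $O(t^j/\log^2 t)$, not $o(t^{j-1}/\log t)$ as you claimed. Since $t^{j-1}\prec t^j/\log^2 t\prec t^j$, the inequality $|f(t)-P(t)|\ll t^j/\log^2 t$ is perfectly compatible with $t^{j-1}\prec f(t)\prec t^j$ even when $\deg P\leq j-2$: the estimate $f(t)\leq |P(t)|+Ct^j/\log^2 t$ still allows $f(t)/t^{j-1}\to\infty$. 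So no contradiction is produced and the inductive lower bound is not established. Using $f\prec t^j$ to insert an $\epsilon$ in front of $t^j/\log^2 t$ does not help, because the bound you are trying to contradict is $f/t^{j-1}\to\infty$, and $\epsilon\,t/\log^2 t$ already tends to infinity.

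The missing idea is visible in the paper's own \cref{prop:hardy-diff-ineq-0}: in the product $u_\ell=\prod_{k=1}^\ell r_k$, \emph{at most one} factor can tend to $0$. Indeed $r_k\to 0$ forces $(\log|f^{(k-1)}|)'=o(1/t)$, hence $\log|f^{(k-1)}|=o(\log t)$, i.e.\ $f^{(k-1)}$ is subpolynomial; using $|f^{(k-1)}|\succ t^{j-k}/\log^2 t$ and $|f^{(k-1)}|\prec t^{j-k+1}$ this pins down $k\in\{j,j+1\}$, and $r_k\to 0$ for both $k=j$ and $k=j+1$ would force $\log|f^{(j)}|/\log t$ to tend simultaneously to $0$ and to a value $\leq -1$, a contradiction. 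For the (at most one) exceptional index $k_0$ one applies exactly your base-case integration argument to $g=f^{(k_0-1)}$: since $g$ tends to $0$ or to $\pm\infty$ (if $g\to c\neq 0$, repeated integration would force $f\sim\tfrac{c}{(j-1)!}t^{j-1}$, contradicting $f\succ t^{j-1}$), boundedness of $|r_{k_0}|\log^2 t$ would make $\log|g|$ bounded, which is impossible. Hence $|r_{k_0}|\log^2 t\to\infty$, while every other $r_k$ tends to a finite nonzero limit, and $|u_\ell|\log^2 t\to\infty$ follows. This is the argument your induction step needs; the na\"ive factor-by-factor loss you worry about never occurs because only one factor can contribute a logarithmic loss.
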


Next, we derive a series of lemmas
(Lemmas \ref{lem:deriv-2} -- \ref{lem:deriv-4})
which are needed for the proof of the main result of this section, \cref{prop:hardy-diff-ineq-0}.

\begin{Lemma}
\label{lem:deriv-2}
Let $m\in\N$ and let $\Hardy$ be a Hardy field. Suppose $f,g\in\Hardy$ 
satisfy
$|f(t)|\prec |g(t)|\prec |f(t)|\log^m(t)$ and
$|\log(|f(t)|)|\succ \log_2(t)$.
Then
$$
\frac{f'(t)}{f(t)}\sim \frac{g'(t)}{g(t)}.
$$
\end{Lemma}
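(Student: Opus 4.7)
The plan is to reduce the assertion to a statement about logarithmic derivatives and then apply L'Hopital's rule, exploiting the Hardy-field fact that every element of $\Hardy$ has a limit in $\R\cup\{\pm\infty\}$.

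Since $f, g\in\Hardy$ are eventually of constant sign and neither $f'/f$ nor $g'/g$ is affected by replacing $f,g$ with $|f|,|g|$, we may assume $f,g>0$ for $t$ large. Set $h:=g/f\in\Hardy$. The hypothesis $|f|\prec|g|\prec|f|\log^m(t)$ rewrites as $1\prec h\prec\log^m(t)$, so $\log h\to\infty$ and $\log h=O(\log_2(t))$. Combined with $|\log f|\succ\log_2(t)$, this forces $\log f\to\pm\infty$ together with $\log h=o(\log f)$.

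Because $g'/g=f'/f+h'/h$, it suffices to show $h'/h=o(f'/f)$, i.e.\ $(\log h)'/(\log f)'\to 0$. Using the composition property from the introduction, we enlarge $\Hardy$ to a Hardy field $\Hardy'$ containing both $\log f$ and $\log h$; the ratio $(\log h)'/(\log f)'$ then lies in $\Hardy'$ and hence converges to some $L\in\R\cup\{\pm\infty\}$. Because $\log f\to\pm\infty$ and $(\log f)'=f'/f$ is eventually nonzero (any nonconstant Hardy-field function has derivative that is either identically zero or eventually nonzero), L'Hopital's rule equates $L$ with $\lim\log h/\log f=0$. Hence $h'/h=o(f'/f)$, giving $g'/g\sim f'/f$.

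The only delicate point is the applicability of L'Hopital: we need both that $\log f$ and $\log h$ live in a common Hardy field (so that $(\log h)'/(\log f)'$ is monotone near infinity, and in particular has a limit) and that $(\log f)'$ does not vanish near infinity. Both rely only on the standard Hardy-field properties recalled in the introduction, so no essentially new ingredient is required.
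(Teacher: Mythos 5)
Your proof is correct and takes essentially the same approach as the paper: both reduce the claim to the behavior of logarithmic derivatives, use the Hardy-field property that ratios in $\Hardy$ have limits, and apply L'Hopital to replace the ratio of logarithmic derivatives by the ratio of logarithms. The only cosmetic difference is that you subtract off the leading term and work with $h = g/f$ to show $h'/h = o(f'/f)$, whereas the paper applies L'Hopital directly to $\frac{g'/g}{f'/f}$ and then squeezes $\lim\frac{\log|g|}{\log|f|}$ by a case distinction on whether $|f|\to\infty$ or $|f|\to 0$; also you needn't actually enlarge $\Hardy$, since $(\log h)'/(\log f)' = \frac{h'/h}{f'/f}$ already lies in $\Hardy$.
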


\begin{proof}
Our goal is to show that
$$
\frac
{
~\frac{g'(t)}{g(t)}~
}
{
\frac{f'(t)}{f(t)}
}
\xrightarrow[]{t\to\infty} 1.
$$
First we note that since $\Hardy$ is a field closed under differentiation, 
the function $\frac{{g'(t)}/{g(t)}}{{f'(t)}/{f(t)}}$
is contained in $\Hardy$. From this it follows that $\lim_{t\to\infty}\frac{{g'(t)}/{g(t)}}{{f'(t)}/{f(t)}}$ exists
(as a number in $\R\cup\{-\infty,\infty\}$). 
From L'Hospital's rule we now obtain
$$
\lim_{t\to\infty}\frac
{
~\frac{g'(t)}{g(t)}~
}
{
\frac{f'(t)}{f(t)}
}
~=~
\lim_{t\to\infty}
\frac{\log(|g(t)|)}{\log(|f(t)|)}.
$$
To finish the proof we distinguish between the cases $|f(t)|\succ 1$
and $|f(t)|\prec 1$.
If $|f(t)|\succ 1$ then, using $|f(t)|\prec |g(t)|\prec |f(t)|\log^m(t)$
and $|\log(|f(t)|)|\succ \log_2(t)$, we deduce that
$$
1\leq \lim_{t\to\infty}
\frac{\log(|g(t)|)}{\log(|f(t)|)}
\leq \lim_{t\to\infty}
\frac{\log(|f(t)|)+m\log_2(t)}{\log(|f(t)|)}=1.
$$
Likewise, if $|f(t)|\prec 1$, then we have
$$
1\geq \lim_{t\to\infty}
\frac{\log(|g(t)|)}{\log(|f(t)|)}
\geq \lim_{t\to\infty}
\frac{\log(|f(t)|)+m\log_2(t)}{\log(|f(t)|)}=1.
$$
This finishes the proof.
\end{proof}

\begin{Lemma}
\label{lem:deriv-5}
Let $\Hardy$ be a Hardy field and 
suppose $f\in\Hardy$ 
satisfies condition \ref{condition:B}. Then
$f^{(\ell)}$ satisfies either $f^{(\ell)}(t)\succ 1$
or $f^{(\ell)}(t)\prec 1$ for all $\ell\in\N$.
\end{Lemma}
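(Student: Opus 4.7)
The plan is to exploit the order structure of the Hardy field together with its closure under differentiation. Since $f^{(\ell)}\in\Hardy$, the limit $c:=\lim_{t\to\infty}f^{(\ell)}(t)$ exists in $\R\cup\{\pm\infty\}$. The easy cases are $c=\pm\infty$ and $c=0$: in the first, $|f^{(\ell)}(t)|\to\infty$ yields $f^{(\ell)}(t)\succ 1$, and in the second, $|f^{(\ell)}(t)|\to 0$ yields $f^{(\ell)}(t)\prec 1$. It therefore remains only to rule out $c\in\R\setminus\{0\}$.

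To handle the remaining case, I would assume for contradiction that $c$ is a nonzero real and integrate $f^{(\ell)}(t)=c+o(1)$ a total of $\ell$ times, establishing by induction on $k$ the asymptotic
\[
f^{(\ell-k)}(t)\,=\,\frac{c\,t^k}{k!}+o(t^k)\qquad (k=0,1,\ldots,\ell),
\]
which at $k=\ell$ gives $f(t)\sim (c/\ell!)\,t^\ell$. Condition \ref{condition:B} would then require an integer $j\in\N$ with $t^{j-1}\prec f(t)\prec t^j$; but the asymptotic $f(t)\sim (c/\ell!)t^\ell$ forces $f(t)/t^{j-1}\to\infty$ to demand $\ell\geq j$, while $f(t)/t^j\to 0$ demands $\ell\leq j-1$. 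These constraints are incompatible, giving the desired contradiction.

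The main (modest) obstacle is the iterated integration, where I must verify that the error terms propagate correctly: integrating an $o(t^k)$ term produces an $o(t^{k+1})$ term, and the constants of integration can be absorbed into the leading $o(t^{k+1})$. Once this bookkeeping is in place, the key insight is simply that the asymptotic order $t^\ell$ for integer $\ell$ cannot interpolate strictly between two consecutive integer powers $t^{j-1}$ and $t^j$, which is exactly what condition \ref{condition:B} would require.
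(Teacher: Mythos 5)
Your proof is correct and takes a genuinely different route from the paper's. The paper also argues by contradiction from $f^{(\ell)}(t)\to c$ with $c\in\R\setminus\{0\}$, but it routes the contradiction through the derivative inequalities already established in \cref{lem:Frantzikinakis-Cor2.3} and \cref{lem:deriv-2}: it squeezes $t^\ell \prec f(t) \prec t^\ell \log^2(t)$, then inductively applies \cref{lem:deriv-2} to get $\frac{f^{(\ell)}(t)}{f(t)}\sim \frac{\ell!}{t^\ell}$, and combines this with $t^\ell\prec f(t)$ to force $f^{(\ell)}(t)\succ 1$, contradicting $f^{(\ell)}(t)\to c$. Your argument bypasses all of that machinery: you invoke only the basic Hardy-field fact that $f^{(\ell)}\in\Hardy$ has a limit in $\R\cup\{\pm\infty\}$, and then an elementary iterated integration (with the verified bookkeeping that $\int o(s^k)\,ds = o(t^{k+1})$ and that constants of integration are absorbed) to obtain $f(t)\sim (c/\ell!)\,t^\ell$, which is directly incompatible with condition \ref{condition:B} because a nonzero asymptotic proportionality to an integer power $t^\ell$ cannot sit strictly between consecutive powers $t^{j-1}$ and $t^j$. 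The trade-off is that the paper's version is essentially ``free'' given the surrounding lemmas (which it needs anyway for \cref{prop:hardy-diff-ineq-0}), while your version is more self-contained and would stand on its own without \cref{lem:Frantzikinakis-Cor2.3} or \cref{lem:deriv-2}.
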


\begin{proof}
By way of contradiction, let us assume that
there exist $\ell\in\N$ and $c\in\R$ such that $f^{(\ell)}(t)\sim c$.
Observe that $c\neq 0$, because otherwise $f(t)$ is a polynomial,
which contradicts condition \ref{condition:B}.

Using \cref{lem:Frantzikinakis-Cor2.3} we deduce that
$$
\frac{f(t)}{t^\ell \log^2(t)}\prec |c|\ll \frac{f(t)}{t^\ell},
$$
which is equivalent to
$$
t^\ell \ll f(t) \prec  t^\ell \log^2(t).
$$
It follows from
condition \ref{condition:B} that we can replace
$t^\ell \ll f(t)$ with $t^\ell \prec f(t)$. Therefore, we have
$$
t^\ell \prec f(t) \prec t^\ell \log^2(t).
$$

By using induction on $i$ and by repeatedly applying \cref{lem:deriv-2} to the functions
$f^{(i)}(t)$ and $t^i$,
we conclude that for all $i\in\{0,1,\ldots,\ell-1\}$,
$$
\frac{f^{(i+1)}(t)}{f^{(i)}(t)}\sim \frac{(\ell-1)t^{\ell-i-1}}{t^{\ell-i}}=\frac{\ell-i}{t}.
$$
In particular, this shows that
$$
\frac{f^{(\ell)}(t)}{f(t)}\sim \frac{\ell!}{t^\ell}.
$$
Finally, combing $t^\ell \prec f(t) $ and
$
\frac{f^{(\ell)}(t)}{f(t)}\sim \frac{\ell!}{t^\ell}
$
yields $f^{(\ell)}(t)\succ 1$, which contradicts $f^{(\ell)}(t)\sim c$.
\end{proof}

\begin{Lemma}
\label{lem:deriv-3}
Let $\Hardy$ be a Hardy field and 
suppose that $f\in\Hardy$ satisfies
either $f(t)\succ 1$
or $f(t)\prec 1$. Also, assume
$|\log(|f(t)|)|\ll \log_2(t)$.
Then
$$
\frac{|f(t)|}{t\log(t)
\log_2^2(t)} \prec |f'(t)|
\ll \frac{|f(t)|}{t\log(t)}.
$$
\end{Lemma}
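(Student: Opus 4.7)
The plan is to reduce the statement to a bound on the logarithmic derivative. Set $h(t):=\log|f(t)|$. The hypothesis that $f\succ 1$ or $f\prec 1$ ensures $|f|$ is eventually positive and tends to $\infty$ or $0$, so by the composition-closure of Hardy fields with $\LE$-functions recalled in the introduction, $h$ belongs to some Hardy field $\Hardy'\supseteq\Hardy$. Since $h'=f'/f$, the conclusion of the lemma is equivalent to
\[
\frac{1}{t\log(t)\log_2^2(t)}~\prec~|h'(t)|~\ll~\frac{1}{t\log(t)},
\]
and this is what I would prove. The hypothesis $|\log|f(t)||\ll\log_2(t)$ is exactly $|h(t)|\ll\log_2(t)$, while $f\succ 1$ or $f\prec 1$ forces $|h(t)|\to\infty$; since $h'\in\Hardy'$ is eventually of constant sign, $h$ is eventually monotone, and after replacing $f$ by $1/f$ if necessary (which preserves both hypothesis and conclusion), I may assume $h\to+\infty$ and $h'>0$ eventually.

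For the upper bound, the ratio $h'(t)\cdot t\log(t)$ lies in $\Hardy'$ and so has a limit in $\R\cup\{\pm\infty\}$. Since $h,\log_2\to\infty$, L'Hospital's rule applied to $h/\log_2$ yields
\[
\lim_{t\to\infty}\frac{h(t)}{\log_2(t)} ~=~ \lim_{t\to\infty}h'(t)\cdot t\log(t).
\]
The left-hand side is bounded by hypothesis, so the right-hand limit is finite, giving $|h'(t)|\ll 1/(t\log(t))$.

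For the lower bound, I would consider $h'(t)\cdot t\log(t)\log_2^2(t)\in\Hardy'$, whose limit again exists in $\R\cup\{\pm\infty\}$. If this limit were finite, then $0<h'(t)\leq C/(t\log(t)\log_2^2(t))$ eventually; using the primitive $\int ds/(s\log(s)\log_2^2(s))=-1/\log_2(s)+\text{const}$, integration would bound $h$, contradicting $h\to\infty$. Thus $h'(t)\cdot t\log(t)\log_2^2(t)\to\infty$, which is the desired lower bound.

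The main obstacle I anticipate is the bookkeeping required to reduce cleanly to the case $h\to+\infty$ with $h'>0$, and to confirm that $\log|f|$ together with the various auxiliary combinations $h'\cdot t\log(t)$ and $h'\cdot t\log(t)\log_2^2(t)$ all inhabit a common Hardy field so that their limits in $\R\cup\{\pm\infty\}$ are at our disposal. Once this setup is in place, the L'Hospital step and the integral test for the lower bound are essentially routine.
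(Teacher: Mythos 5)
Your proof is correct and follows the same basic strategy as the paper: pass to the logarithmic derivative $h'=f'/f$ with $h=\log|f|$ and apply L'Hospital to the ratio $h/\log_2$. The upper bound is handled identically. For the lower bound the arguments diverge slightly. The paper asserts the L'Hospital-type identity
\[
\lim_{t\to\infty}\frac{f'(t)/f(t)}{1/\bigl(t\log(t)\log_2^2(t)\bigr)}
~=~
\lim_{t\to\infty}\log(|f(t)|)\log_2(t)~=~\pm\infty,
\]
which, as written, does not fit the standard $\tfrac{0}{0}$ or $\tfrac{\infty}{\infty}$ hypotheses (here $h\to\pm\infty$ while the candidate antiderivative $-1/\log_2$ tends to $0$), so it needs a little unpacking to be made fully rigorous. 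Your alternative --- observe that $h'(t)\,t\log(t)\log_2^2(t)$ lies in a Hardy field, hence has a limit, and if that limit were finite then $\int^t ds/(s\log s\log_2^2 s)=-1/\log_2(s)+C$ would force $h$ to be bounded, contradicting $|h|\to\infty$ --- is a clean and self-contained way to get the same conclusion, and it avoids the delicate L'Hospital step. The preliminary reduction you perform (replacing $f$ by $1/f$ so that $h\to+\infty$ and $h'>0$) is harmless but not actually needed for the upper bound, and the bookkeeping you worry about (that $\log|f|$ and the auxiliary products lie in a common Hardy field $\Hardy'\supseteq\Hardy\cup\LE$) is exactly the point the paper leaves implicit; it is justified by the closure properties of Hardy fields under composition with $\LE$-functions recalled in the introduction.
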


\begin{proof}(cf. the proof of Lemma 2.1 in \cite{Frantzikinakis09}).~
By L'Hospital's rule we get
$$
\lim_{t\to\infty}\frac
{
~\frac{f'(t)}{f(t)}~
}
{
\frac{1}{t\log(t)}
}
~=~\lim_{t\to\infty}\frac{\log(|f(t)|)}{\log_2(t)}\ll 1.
$$
This proves that $|f'(t)|
\ll \frac{|f(t)|}{t\log(t)}$.

On the other hand, we have
$$
\lim_{t\to\infty}\frac
{
~\frac{f'(t)}{f(t)}~
}
{
\frac{1}{t\log(t)\log_2^2(t)}
}
~=~\lim_{t\to\infty} \log(|f(t)|) \log_2(t) =\pm\infty,
$$
which shows that
$
\frac{|f(t)|}{t\log(t)
\log_2^2(t)} \prec |f'(t)|
$.
\end{proof}

\begin{Lemma}
\label{lem:deriv-4}
Let $m\in\N$, let $\Hardy$ be a Hardy field and let
$f,g\in\Hardy$. Assume that
$f$ satisfies either $f(t)\succ 1$
or $f(t)\prec 1$ and $g$ satisfies either $g(t)\succ 1$
or $g(t)\prec 1$.
Also, assume $|f(t)|\prec |g(t)|\prec |f(t)|\log^m(t)$ and
$|\log(|f(t)|)|\ll \log_2(t)$.
Then
$$
\left|\frac{f'(t)}{f(t)}\right|\frac{1}{\log_2^2(t)}\prec \left|\frac{g'(t)}{g(t)}\right| \prec \left|\frac{f'(t)}{f(t)}\right|\log_2^2(t).
$$
\end{Lemma}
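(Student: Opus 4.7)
The plan is to apply Lemma~\ref{lem:deriv-3} separately to $f$ and to $g$ and then combine the resulting two-sided estimates for $|f'/f|$ and $|g'/g|$.

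The key preliminary step is to show that the hypothesis $|\log(|f(t)|)|\ll \log_2(t)$ propagates automatically to $g$. Since $f$ and $g$ are each eventually either $\succ 1$ or $\prec 1$, the functions $|f(t)|$ and $|g(t)|$ are positive for all large $t$; moreover, the sandwich $|f|\prec |g|\prec |f|\log^m(t)$ translates into $1<|g(t)|/|f(t)|<\log^m(t)$ eventually. Taking logarithms gives $0<\log|g(t)|-\log|f(t)|<m\log_2(t)$, and hence
$$|\log(|g(t)|)|\leq |\log(|f(t)|)|+m\log_2(t)\ll \log_2(t).$$

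With this in hand, both $f$ and $g$ satisfy the hypotheses of Lemma~\ref{lem:deriv-3}, and applying that lemma to each of them yields
$$\frac{1}{t\log(t)\log_2^2(t)}\prec \left|\frac{f'(t)}{f(t)}\right|\ll \frac{1}{t\log(t)}\qquad\text{and}\qquad \frac{1}{t\log(t)\log_2^2(t)}\prec \left|\frac{g'(t)}{g(t)}\right|\ll \frac{1}{t\log(t)}.$$

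The conclusion then follows by combining these estimates. Using $|g'/g|\ll 1/(t\log(t))$ together with $|f'/f|\succ 1/(t\log(t)\log_2^2(t))$, the ratio $|f'(t)/f(t)|\log_2^2(t)/|g'(t)/g(t)|$ is bounded below by a constant multiple of $|f'/f|\cdot t\log(t)\log_2^2(t)$, which tends to infinity; hence $|g'/g|\prec |f'/f|\log_2^2(t)$. The symmetric calculation, with the roles of $f$ and $g$ exchanged, yields $|f'/f|\prec |g'/g|\log_2^2(t)$, which is equivalent to the lower bound $|f'/f|/\log_2^2(t)\prec |g'/g|$. The only substantive point in the argument is the propagation of the $\log_2$-bound from $f$ to $g$; after that, invoking Lemma~\ref{lem:deriv-3} and comparing $\ll$- and $\succ$-orders is routine, so I do not anticipate any real obstacle.
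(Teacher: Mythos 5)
Your proof is correct and follows essentially the same route as the paper's: propagate the bound $|\log(|g(t)|)|\ll\log_2(t)$ from $f$ to $g$ via the sandwich $|f|\prec|g|\prec|f|\log^m(t)$, apply \cref{lem:deriv-3} to both $f$ and $g$, and combine the resulting two-sided estimates. The only difference is that you spell out the propagation step in more detail than the paper, which simply asserts it; the combination step is identical.
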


\begin{proof}
It follows form $|f(t)|\prec |g(t)|\prec |f(t)|\log^m(t)$ and
$|\log(|f(t)|)|\ll \log_2(t)$ that $|\log(|g(t)|)|\ll \log_2(t)$. Hence we can apply \cref{lem:deriv-3} to both $f$ and $g$ and obtain
$$
\frac{1}{t\log(t)
\log_2^2(t)} \prec \left|\frac{f'(t)}{f(t)}\right|
\ll \frac{1}{t\log(t)}
$$
as well as
$$
\frac{1}{t\log(t)
\log_2^2(t)} \prec \left|\frac{g'(t)}{g(t)}\right|
\ll \frac{1}{t\log(t)}.
$$
We deduce that
$$
\left|\frac{g'(t)}{g(t)}\right| \ll \frac{1}{t\log(t)} = \frac{ \log_2^2(t)}{t\log(t)
\log_2^2(t)} \prec \left|\frac{f'(t)}{f(t)}\right|\log_2^2(t).
$$
Similarly,
$$
\left|\frac{g'(t)}{g(t)}\right| \succ \frac{1}{t\log(t)
\log_2^2(t)} \gg \left|\frac{f'(t)}{f(t)}\right|\frac{1}{\log_2^2(t)}.
$$
\end{proof}

\begin{Proposition}
\label{prop:hardy-diff-ineq-0}
Let $m\in\N$,
let $\Hardy$ be a Hardy field,
let $f,g\in\Hardy$ 
and let $F:[1,\infty)\to(0,\infty)$
be an increasing function satisfying $1\prec F(t)\prec \log^m(t)$.
If $f$ and $g$ satisfy condition \ref{condition:B}
and $\frac{g(t)}{f(t)}\succ F(t)$,
then
$$
\left|
\frac{g^{(\ell)}(t)}{f^{(\ell)}(t)}
\right|
\succ \frac{F(t)}{\log_2^2(t)},
\qquad\forall\ell\in\N.
$$
\end{Proposition}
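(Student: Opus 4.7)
The plan is to prove \cref{prop:hardy-diff-ineq-0} using the telescoping identity
\begin{equation*}
\frac{g^{(\ell)}(t)}{f^{(\ell)}(t)} = \frac{g(t)}{f(t)}\cdot\prod_{k=0}^{\ell-1}\frac{g^{(k+1)}(t)/g^{(k)}(t)}{f^{(k+1)}(t)/f^{(k)}(t)}.
\end{equation*}
Since $g/f\succ F$, it suffices to bound the product of ratios of log-derivatives from below by a constant multiple of $1/\log_2^2(t)$.

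I would split the argument into two regimes according to the growth of $g/f$. In the fast-growth regime, where $g(t)/f(t)\succ \log^{m+2}(t)$, \cref{lem:Frantzikinakis-Cor2.3} applied directly to $f$ and $g$ yields
\begin{equation*}
\left|\frac{g^{(\ell)}(t)}{f^{(\ell)}(t)}\right| \succ \frac{g(t)}{f(t)\log^2(t)} \succ \log^{m}(t) \succ F(t),
\end{equation*}
which is stronger than required. In the complementary polylog regime, where $g(t)/f(t)\prec \log^{m'}(t)$ for some $m'$, the ratio hypothesis $|f|\prec|g|\prec|f|\log^{m'}$ of \cref{lem:deriv-2,lem:deriv-4} is satisfied; moreover \cref{lem:Frantzikinakis-Cor2.3} ensures that the ratio $|g^{(k)}/f^{(k)}|$ stays polylog-bounded at every level $k$, so the analogous hypothesis holds for the pair $f^{(k)}, g^{(k)}$ (possibly with the roles of $f$ and $g$ swapped at some levels).

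I would then estimate each factor of the telescoping product by comparing $(f^{(k)})'/f^{(k)}$ with $(g^{(k)})'/g^{(k)}$: at levels $k$ with $|\log(|f^{(k)}|)|\succ \log_2(t)$, \cref{lem:deriv-2} applies and gives a factor $\sim 1$; at levels $k$ with $|\log(|f^{(k)}|)|\ll \log_2(t)$, \cref{lem:deriv-4} applies and gives a factor bounded below by a constant multiple of $1/\log_2^2(t)$. The crucial observation that prevents the $\log_2^2$ loss from accumulating with $\ell$ is that, by \cref{lem:Frantzikinakis-Cor2.3}, one has $\log|f^{(k)}(t)|=\log|f(t)|-k\log(t)+O(\log_2(t))$, so if $|\log(|f^{(k_1)}|)|\ll\log_2(t)$ and $|\log(|f^{(k_2)}|)|\ll\log_2(t)$ held for distinct $k_1,k_2$, subtracting would yield $|k_2-k_1|\log(t)\ll\log_2(t)$, which is impossible. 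Hence \cref{lem:deriv-4} is invoked at most once across the product, contributing a single factor of $1/\log_2^2(t)$, while \cref{lem:deriv-2} handles every other level without loss.

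The main obstacle will be the bookkeeping in the polylog regime: carefully verifying the hypotheses of \cref{lem:deriv-2,lem:deriv-4} at every level $k$ (including the possibility that the direction of $|f^{(k)}|\prec|g^{(k)}|$ versus $|g^{(k)}|\prec|f^{(k)}|$ may depend on $k$, affecting which of the two functions plays the role of ``$f$'' in the lemmas), and ensuring that at the exceptional level $k_f$ where \cref{lem:deriv-4} is invoked, the polylog upper bound on the ratio $|g^{(k_f)}|/|f^{(k_f)}|$ is still satisfied. Both issues should follow from the polylog control given by \cref{lem:Frantzikinakis-Cor2.3}.
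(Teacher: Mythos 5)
Your proposal follows essentially the same route as the paper's proof: the identical case split into $g/f \succ \log^{m+2}(t)$ (handled directly by \cref{lem:Frantzikinakis-Cor2.3}) and $g/f \ll \log^{m+2}(t)$ (handled by the telescoping product of ratios of log-derivatives), and the identical key point that \cref{lem:deriv-4} is invoked at most once in the product while \cref{lem:deriv-2} handles all other levels without loss. Your explicit justification via $\log|f^{(k)}(t)|=\log|f(t)|-k\log(t)+O(\log_2(t))$ for why at most one level $k$ can have $|\log|f^{(k)}||\ll\log_2(t)$ is a correct unpacking of the claim the paper states more tersely, and the bookkeeping concerns you flag (possible role-swapping of $f^{(k)}$ and $g^{(k)}$, persistence of the polylog bound at the exceptional level) are genuine but resolve exactly as you indicate, since \cref{lem:deriv-2} and \cref{lem:deriv-4} are symmetric in $f$ and $g$ and \cref{lem:Frantzikinakis-Cor2.3} keeps $|g^{(k)}/f^{(k)}|$ within a $\log^2(t)$ factor of $g/f$ at every level. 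One small remark: you should also invoke \cref{lem:deriv-5} (as the paper does) to guarantee each $f^{(k)}$ and $g^{(k)}$ satisfies either $\succ 1$ or $\prec 1$, which is an explicit hypothesis of \cref{lem:deriv-3,lem:deriv-4}; this is implicit in your ``polylog control'' remark but worth stating.
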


\begin{proof}
Let $\ell\in\N$ be arbitrary. We distinguish between
the following two cases.
The first case is $g(t)\succ f(t) \log^{m+2}(t)$
and the second case is $g(t)\ll f(t) \log^{m+2}(t)$.

We start with the proof of the first case. Using \cref{lem:Frantzikinakis-Cor2.3}
we obtain the estimate
$$
\left|\frac{g^{(\ell)}(t)}{f^{(\ell)}(t)}\right|\succ\frac{g(t)}{f(t)\log^2(t)},
$$
and, since $g(t)\succ f(t) \log^{m+2}(t)$, we get
$$
\frac{g(t)}{f(t)\log^2(t)}\succ \log^{m}(t)\succ F(t).
$$
Therefore $\left|\frac{g^{(\ell)}(t)}{f^{(\ell)}(t)}\right|\succ F(t)$,
which concludes the proof of case one.

Next, we deal with the second case. Consider the product
$$
\frac{f(t)}{g(t)}\left|\frac{g^{(\ell)}(t)}{f^{(\ell)}(t)}\right|=\prod_{i=0}^{\ell-1} \frac{\left|\frac{g^{(i+1)}(t)}{g^{(i)}(t)}\right|}{\left|\frac{f^{(i+1)}(t)}{f^{(i)}(t)}\right|}.
$$
In virtue of \cref{lem:deriv-5}, $f^{(i)}$ satisfies either
$f^{(i)}\succ 1$ or $f^{(i)}\prec 1$. The same is true for $g^{(i)}$. Also, it follows from
\cref{lem:Frantzikinakis-Cor2.3} that for at most one $i$ between $1$ and $\ell$
the function $f^{(i)}$ satisfies $|\log(|f^{(i)}(t)|)|\ll\log_2(t)$; for all other
$i$ between $1$ and $\ell$ the function $f^{(i)}$ must satisfy $|\log(|f^{(i)}(t)|)|\succ \log_2(t)$.
We can therefore apply \cref{lem:deriv-2} and \cref{lem:deriv-4} to deduce that for
at most one $i$ between $1$ and $\ell$ we have
$$
\frac{\left|\frac{g^{(i+1)}(t)}{g^{(i)}(t)}\right|}{\left|\frac{f^{(i+1)}(t)}{f^{(i)}(t)}\right|}\succ \log_2^2(t)
$$
and for all other $i$ we have
$$
\frac{\left|\frac{g^{(i+1)}(t)}{g^{(i)}(t)}\right|}{\left|\frac{f^{(i+1)}(t)}{f^{(i)}(t)}\right|}
\xrightarrow[]{t\to\infty}1.
$$
Therefore
$$
\frac{f(t)}{g(t)}\left|\frac{g^{(\ell)}(t)}{f^{(\ell)}(t)}\right|
=
\prod_{i=0}^{\ell-1} \frac{\left|\frac{g^{(i+1)}(t)}{g^{(i)}(t)}\right|}{\left|\frac{f^{(i+1)}(t)}{f^{(i)}(t)}\right|}
\succ\log_2^2(t).
$$
This, together with $\frac{g(t)}{f(t)}\succ F(t)$, implies
$$
\left|
\frac{g^{(\ell)}(t)}{f^{(\ell)}(t)}
\right|
\succ \frac{F(t)}{\log_2^2(t)}.
$$
\end{proof}

\section{Van der Corput's method for estimating exponential sums}
\label{sec:vdC}

We recall three classical theorems on estimating
exponential sums.
For proofs and more detailed discussion we refer the
reader to Section 2 in the book of
Graham and Kolesnik \cite{GK91}.
 
We start with the Kusmin-Landau
inequality for exponential sums (cf. \cite[Theorem 2.1]{GK91}).

\begin{Theorem}[Estimate based on $1^{st}$ derivative]
\label{thm:vdC-estimate-1}
Suppose $I\subset\R$ is an interval, $f\in C^{1}(I)$ and there exists
$\lambda>0$ such that
$\lambda < \left|f'(t)\right|< (1-\lambda)$ for all $t\in I$.
Then
$$
\left|\sum_{n\in I} e(f(n))\right|\ll\lambda^{-1}.
$$
\end{Theorem}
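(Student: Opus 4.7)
The plan is to prove this via classical Abel summation combined with the geometric identity
\[
e(f(n)) \;=\; \frac{e(f(n+1)) - e(f(n))}{e(\alpha_n) - 1}, \qquad \alpha_n := f(n+1) - f(n),
\]
which is legitimate because the mean value theorem together with the hypothesis $\lambda < |f'(t)| < 1-\lambda$ gives $\alpha_n = f'(\xi_n)$ for some $\xi_n \in (n,n+1)$, so $\lambda < |\alpha_n| < 1-\lambda$ and in particular $\alpha_n \notin \Z$.

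Writing $c_n := 1/(e(\alpha_n)-1)$ and $u_n := e(f(n))$, summation by parts applied to $\sum_{n \in I} c_n(u_{n+1}-u_n)$ produces two boundary terms together with a sum of the form $-\sum_n (c_n - c_{n-1})\, u_{n+1}$. Since $|e(\alpha)-1| = 2|\sin(\pi\alpha)|$, and since the constraint $\lambda < |\alpha_n| < 1-\lambda$ keeps $\alpha_n$ at distance at least $\lambda$ from $\Z$, concavity of $\sin$ on $[0,\pi/2]$ gives $|\sin(\pi\alpha_n)| \geq \sin(\pi\lambda) \geq 2\lambda$. Hence each $|c_n| \ll \lambda^{-1}$, and the boundary terms already contribute $O(\lambda^{-1})$. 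It then remains to bound the total variation $\sum_n |c_n - c_{n-1}|$: when $(\alpha_n)$ is monotonic and confined to one of the two arcs $(\lambda, 1-\lambda)$ or $(-(1-\lambda), -\lambda)$, the sequence $(c_n)$ is itself monotonic and the sum telescopes to $|c_{\max} - c_{\min}| \ll \lambda^{-1}$, which gives the desired estimate.

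The main obstacle is precisely this monotonicity issue: the stated hypothesis does not by itself force $\alpha_n$ to vary monotonically in $n$, so one either strengthens the hypothesis to include monotonicity of $f'$ (as in the usual textbook formulation of Kusmin--Landau), or reduces to that case by partitioning $I$ into maximal sub-intervals on which $f'$ is monotonic and on which $\mathrm{sign}(\alpha_n)$ is constant, then summing the resulting $O(\lambda^{-1})$ bounds. In the Hardy-field applications pursued in the later sections this subdivision is harmless, since non-constant elements of a Hardy field are eventually monotonic and so produce only $O(1)$ sub-intervals. Keeping the proof at the level of this standard template, and being careful with the geometric identity near the endpoints of $I$, suffices to conclude.
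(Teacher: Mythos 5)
The paper does not supply its own proof of this statement---it is quoted from Graham and Kolesnik (Theorem~2.1 of \cite{GK91})---so there is no internal argument to compare against. Your proposal reproduces the standard Kusmin--Landau proof, and the scaffolding (geometric identity, Abel summation, boundary terms, total-variation bound) is in the right place.

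The one step that carries the real content of the lemma is glossed over: you assert that ``the sequence $(c_n)$ is itself monotonic and the sum telescopes to $|c_{\max}-c_{\min}|$,'' but $c_n$ is a \emph{complex} number, so this is not meaningful as stated, and monotonicity of $\alpha_n$ alone does not visibly yield $\sum_n|c_n-c_{n-1}|\ll\lambda^{-1}$---each term is only $O(\lambda^{-1})$ and there are $|I|$ of them. The missing computation is
\[
c_n \;=\; \frac{1}{e(\alpha_n)-1} \;=\; -\frac{1}{2}\;-\;\frac{i}{2}\cot(\pi\alpha_n),
\]
which shows that every $c_n$ lies on the vertical line $\operatorname{Re}(z)=-\tfrac12$. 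Consequently $|c_n-c_{n-1}|=\tfrac12\,|\cot(\pi\alpha_n)-\cot(\pi\alpha_{n-1})|$, and since $\cot(\pi\,\cdot)$ is monotone on $(0,1)$, monotonicity of $\alpha_n$ within one of the two arcs makes the variation telescope to at most $\cot(\pi\lambda)\ll\lambda^{-1}$. Without this identity, the ``telescoping'' claim is unsupported; with it, the proof closes. Your observation that the printed statement omits the monotonicity of $f'$ assumed in Graham--Kolesnik is correct: without it the bound degrades by a factor equal to the number of monotonicity intervals of $f'$, which, as you note, is $O(1)$ for the Hardy-field functions to which the theorem is applied downstream.
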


The next two theorems are due to van der Corput \cite{vanderCorput23,vanderCorput29}.

\begin{Theorem}[Estimate based on $2^{nd}$ derivative]
\label{thm:vdC-estimate-2}
Suppose $I\subset\R$ is an interval,
$f\in C^{2}(I)$ and there are
$\lambda>0$ and $\eta\geq 1$ such that
$\lambda < \left|f''(t)\right|\leq \eta\lambda$ for all $t\in I$.
Then
$$
\left|\sum_{n\in I} e(f(n))\right|\ll
|I|\eta\lambda^{\frac{1}{2}}+\lambda^{-\frac{1}{2}}.
$$
\end{Theorem}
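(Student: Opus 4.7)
The plan is to derive this second-derivative estimate from Theorem~\ref{thm:vdC-estimate-1} via the classical differencing trick of Weyl and van der Corput (the so-called $A$-process). The underlying idea is that whenever $|f''|$ is of size $\lambda$, the finite difference $g_{h}(t) := f(t+h) - f(t)$ has first derivative $g_{h}'(t) = h f''(\xi_{t})$ of size approximately $h\lambda$ by the mean value theorem; for $h$ not too large this keeps $g_{h}'$ inside a sub-interval of $(0,1)$, so that Theorem~\ref{thm:vdC-estimate-1} can be applied to each of the sums $\sum_{n} e(g_{h}(n))$.

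Concretely, I would proceed in three stages. First, I would apply the Weyl--van der Corput inequality: for any positive integer $H$ with $H \le |I|$,
$$\left|\sum_{n \in I} e(f(n))\right|^{2} \ll \frac{|I|^{2}}{H} + \frac{|I|}{H}\sum_{h=1}^{H-1}\left|\sum_{n \in I_{h}} e\bigl(f(n+h) - f(n)\bigr)\right|,$$
where $I_{h} := I \cap (I - h)$. Second, for each $h$ I would invoke the mean value theorem to obtain $h\lambda < |g_{h}'(t)| \le h\eta\lambda$ on $I_{h}$; restricting $H$ so that $H\eta\lambda < 1/2$ then ensures that $g_{h}'$ stays in $(h\lambda,\tfrac{1}{2})$, hence at distance at least $h\lambda$ from every integer, for each $1 \le h \le H-1$. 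Theorem~\ref{thm:vdC-estimate-1} now yields $\bigl|\sum_{n \in I_{h}} e(g_{h}(n))\bigr| \ll (h\lambda)^{-1}$. Third, I would substitute these estimates back, use $\sum_{h \le H} h^{-1} \ll \log H$, and optimize over $H$ (essentially $H \asymp (\eta\lambda)^{-1}$) to recover a bound of the form claimed.

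The main obstacle is to extract the correct linear dependence on $\eta$ in the first term. A direct execution of the sketch above produces a bound of the shape $|I|(\eta\lambda)^{1/2}$, together with a logarithmic factor coming from $\sum h^{-1}$, which falls short of the claimed $|I|\eta\lambda^{1/2}$. Sharpening this requires a more careful partition of $I_{h}$ into pieces on which $\lfloor g_{h}' \rfloor$ is constant (so that larger values of $h$ may also contribute, lifting the artificial restriction $H\eta\lambda < 1/2$), together with a bookkeeping that distributes the logarithmic loss so it can be absorbed into the main term. This refinement is precisely the content of the classical treatment in Graham--Kolesnik~\cite[Section~2]{GK91}, to which the present paper refers.
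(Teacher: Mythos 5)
The paper does not prove Theorem~\ref{thm:vdC-estimate-2} itself; it cites Graham--Kolesnik \cite[Section~2]{GK91}. The proof there is \emph{not} the Weyl--van der Corput differencing ($A$-process) that you propose; it is a direct argument built on Theorem~\ref{thm:vdC-estimate-1} alone. Since $|f''|\ge\lambda>0$, $f'$ is strictly monotone, and since $|f''|\le\eta\lambda$, the range of $f'$ over $I$ has length at most $|I|\eta\lambda$. One therefore partitions $I$ into $O(|I|\eta\lambda+1)$ subintervals on each of which $f'$ stays within $1/2$ of a fixed integer $m$. On such a subinterval, replace $f$ by $f(t)-mt$; split the subinterval into the part where $|f'-m|\le\delta$ and the part where $\delta<|f'-m|\le 1/2$. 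The first part is a single interval of length $O(\delta/\lambda)$ by the mean value theorem (lower bound on $|f''|$), so it is bounded trivially; on the second part Theorem~\ref{thm:vdC-estimate-1} gives $O(\delta^{-1})$. Optimizing with $\delta=\lambda^{1/2}$ gives $O(\lambda^{-1/2})$ per subinterval, and summing over the $O(|I|\eta\lambda+1)$ subintervals yields $|I|\eta\lambda^{1/2}+\lambda^{-1/2}$ exactly, with no logarithm. In \cite{GK91} the $A$-process enters only afterwards, to reduce Theorem~\ref{thm:vdC-estimate-3} inductively to this base case.

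Your sketch therefore has a genuine gap, which you identify honestly but misattribute. The differencing route really does produce the extra factor $(\log H)^{1/2}$ from $\sum_{h\le H}h^{-1}$, and I do not see that the ``more careful partition of $I_h$'' removes it: for the differenced phase $g_h(t)=f(t+h)-f(t)$ one controls $g_h'$ via the mean value theorem, but one has no useful lower bound on $g_h''=f''(t+h)-f''(t)$, so the length of the ``bad'' set where $g_h'$ is near an integer cannot be controlled the way it is for $f$ itself, and the $h^{-1}$ summation survives. Moreover, this refinement is \emph{not} ``precisely the content of the classical treatment in Graham--Kolesnik''; as described above, that treatment bypasses differencing entirely. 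To obtain the theorem as stated you should drop the $A$-process and run the direct partition-plus-Kusmin--Landau argument; the $A$-process is the right tool only for Theorem~\ref{thm:vdC-estimate-3}.
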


\begin{Theorem}[Estimate based on $3^{rd}$ and higher derivatives]
\label{thm:vdC-estimate-3}
Suppose $I\subset\R$ is an interval, $\ell\geq 3$,
$f\in C^{\ell}(I)$ and there are
$\lambda>0$ and $\eta\geq 1$ such that
$\lambda < \left|f^{(\ell)}(t)\right|\leq \eta\lambda$ for all $t\in I$.
Let $Q:=2^{\ell-2}$.
Then
$$
\left|\sum_{n\in I} e(f(n))\right|\ll
|I|(\eta^2\lambda)^{\frac{1}{4Q-2}}+
|I|^{1-\frac{1}{2Q}}\eta^{\frac{1}{2Q}}+
|I|^{1-\frac{2}{Q}+\frac{1}{Q^2}}\lambda^{-\frac{1}{2Q}}.
$$
\end{Theorem}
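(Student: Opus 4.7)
The plan is to reduce \cref{thm:vdC-estimate-3} to the second-derivative bound \cref{thm:vdC-estimate-2} by iterated application of the Weyl--van der Corput differencing inequality (the ``A-process''): for any positive integer $H \leq |I|$,
$$
\left|\sum_{n\in I} e(f(n))\right|^2 \ll \frac{|I|^2}{H} + \frac{|I|}{H}\sum_{h=1}^{H} \left|\sum_{n\in I_h} e(f(n+h)-f(n))\right|,
$$
where $I_h := I \cap (I-h)$. The point is that the auxiliary function $g_h(t) := f(t+h)-f(t)$ has $(\ell-1)$-th derivative $g_h^{(\ell-1)}(t) = h\, f^{(\ell)}(\xi_{t,h})$ by the mean value theorem, so $g_h^{(\ell-1)}$ lies in a range of the form $(h\lambda, h\eta\lambda]$. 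Each A-process thus trades a bound governed by the $\ell$-th derivative for a family of bounds governed by the $(\ell-1)$-th derivative, at the cost of squaring the left-hand side.

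I would iterate this process $\ell-2$ times. After $k$ iterations the innermost sum involves the iterated difference $\Delta_{h_1}\cdots\Delta_{h_k} f$, whose $(\ell-k)$-th derivative lies in an interval of the form $(h_1\cdots h_k\lambda, h_1\cdots h_k\eta\lambda]$ up to constants. For $k = \ell - 2$ the inner sum has second derivative in the required range, so \cref{thm:vdC-estimate-2} yields a bound of the form $|I|\eta(h_1\cdots h_{\ell-2}\lambda)^{1/2} + (h_1\cdots h_{\ell-2}\lambda)^{-1/2}$. I would then substitute this back and work outward through the nested averages over $h_1 \leq H_1, \ldots, h_{\ell-2} \leq H_{\ell-2}$, at each stage taking the square root inherent in reversing the A-process, so that the cumulative exponent loss is $2^{-(\ell-2)} = 1/Q$. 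Finally I would optimize the free parameters $H_1, \dots, H_{\ell-2}$ to balance the three surviving types of error (the trivial term $|I|^2/H$, the term $|I|\eta\Lambda^{1/2}$, and the term $\Lambda^{-1/2}$), arriving at the three-term estimate in the statement.

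The main obstacle is not any single inequality but the bookkeeping of exponents through the nested square roots. Each reversal of an A-process halves the exponent on the corresponding right-hand side, so after all $\ell-2$ iterations one is raising an intricate sum of terms to a power of order $1/(2Q)$; the elaborate exponents $1/(4Q-2)$, $1/(2Q)$, and $1 - 2/Q + 1/Q^2$ in the conclusion emerge from jointly optimizing the $H_i$ while tracking which contributions propagate through how many averaging stages (and in particular which term picks up the extra $-1$ in the denominator from the trivial ``boundary'' piece $|I|^2/H$ at the final step). Once this bookkeeping is carried out the argument is mechanical, and in any case the result is the classical one recalled from \cite{GK91}.
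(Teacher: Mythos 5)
The paper does not prove this theorem; it is stated as a classical result of van der Corput and the reader is referred to Section 2 of Graham--Kolesnik \cite{GK91} for a proof. Your sketch correctly outlines the standard argument given there: iterate the Weyl--van der Corput A-process $\ell-2$ times to reduce to the second-derivative estimate (\cref{thm:vdC-estimate-2}), track how each reversal of the A-process halves the exponent so that after $\ell-2$ iterations the loss is $2^{-(\ell-2)}=1/Q$, and then optimize the differencing lengths $H_1,\ldots,H_{\ell-2}$ to produce the three-term bound. (In \cite{GK91} this is organized as an induction on $\ell$ with a single A-process per step rather than a fully unrolled nesting, but the two formulations are equivalent; your unrolled version makes the $1/Q$ loss transparent at the cost of heavier notation in the optimization step, which you acknowledge but do not carry out.) Since you defer the exponent bookkeeping to the cited source, the proposal is a correct high-level account of the proof rather than a complete one, and it is consistent with the route the paper itself takes by citation.
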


\section{Deriving estimates for exponential sums involving functions from Hardy fields}
\label{sec:estimating-exp-sums}

\begin{Proposition}\label{prop:1st-estimate}
Let $\Hardy$ be a Hardy field and assume $f_1,\ldots,f_k:[1,\infty)\to\R$ are in $\Hardy$.
For $t\in[1,\infty)$ define $f(t):=(f_1(t),\ldots,f_k(t))$
and $E(t):=\min\{|f_1(t)|,\ldots,|f_k(t)|\}$.
Suppose we have:
\begin{enumerate}
[label=(\roman{enumi}),
ref=(\roman{enumi}),leftmargin=*]
\item\label{roman1}
for all $i\in\{1,\ldots,k\}$ the function $f_i$ satisfies
condition \ref{condition:B};
\item\label{roman2}
$\log_2(t)\prec \log(f_i(t))$ for all $i=1,\ldots,k$;
\item\label{roman3}
after reordering $f_1,\ldots,f_k$ if necessary, we have
$\frac{f_{i+1}}{f_{i}}\succ \log_2^4(t)$ for all $i=1,\ldots,k-1$;
\end{enumerate}
Then there exists a constant $C>0$
such that for all $M\in\N$, all
$r,s\in\left[1,\min\left\{M\log^2(M),\frac{E(M)}{\log^4(M)}\right\}\right]$
with $r\geq s$
and all $\tau\in [-\log_2^2(M),\log_2^2(M)]^k\cap\Z^k$ with
$\tau\neq(0,\ldots,0)$ we have
$$
\left|\sum_{n=M}^{2M}
e\left(\frac1r\langle f(s n),\tau\rangle\right)\right|
\leq  \frac{CM}{\log^2(M)}.
$$
\end{Proposition}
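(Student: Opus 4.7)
The plan is to estimate the exponential sum by van der Corput's method, after reducing to the single ``dominant'' term of $\tau$. Set $h(t):=\frac{1}{r}\langle f(st),\tau\rangle=\frac{1}{r}\sum_{i=1}^{k}\tau_{i}\,f_{i}(st)$ and let $j^{*}:=\max\{i:\tau_{i}\neq 0\}$. By hypothesis \ref{roman1} there exists $j\in\N$ such that $t^{j-1}\prec f_{j^{*}}(t)\prec t^{j}$; this integer will determine the order of derivative used when invoking van der Corput.

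The first substantive step is to show that, for every fixed $\ell\in\N$ and every $t\in[M,2M]$,
$$
h^{(\ell)}(t)\;=\;\frac{s^{\ell}\,\tau_{j^{*}}}{r}\,f_{j^{*}}^{(\ell)}(st)\,\bigl(1+o(1)\bigr),
$$
uniformly over $\tau$ subject to the stated constraints. Iterating hypothesis \ref{roman3} yields $f_{j^{*}}/f_{i}\succ \log_{2}^{4}(t)$ for every $i<j^{*}$. Applying \cref{prop:hardy-diff-ineq-0} with $F(t)=\log_{2}^{4}(t)$ then gives $|f_{j^{*}}^{(\ell)}/f_{i}^{(\ell)}|\succ \log_{2}^{2}(t)$; combined with $|\tau_{i}|\leq\log_{2}^{2}(M)$ and the finiteness of $k$, this makes the contributions with $i<j^{*}$ negligible. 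The magnitude of the dominant term is controlled by \cref{lem:Frantzikinakis-Cor2.3}, giving, for $t\in[M,2M]$,
$$
\frac{|\tau_{j^{*}}|\,f_{j^{*}}(sM)}{r\,M^{\ell}\log^{2}(M)}\;\prec\;|h^{(\ell)}(t)|\;\ll\;\frac{|\tau_{j^{*}}|\,f_{j^{*}}(sM)}{r\,M^{\ell}}.
$$

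Next, van der Corput's inequality is applied with derivative order $\ell=j+1$, invoking \cref{thm:vdC-estimate-3} with $Q=2^{\ell-2}$ (when $j=1$, \cref{thm:vdC-estimate-2} suffices). One takes $\lambda$ to be the lower bound just displayed and $\eta$ a polylog$(M)$ factor reflecting the ratio of the upper to the lower bound. It then remains to verify that each of the three summands
$$
M\,(\eta^{2}\lambda)^{\tfrac{1}{4Q-2}},\qquad M^{1-\tfrac{1}{2Q}}\eta^{\tfrac{1}{2Q}},\qquad M^{1-\tfrac{2}{Q}+\tfrac{1}{Q^{2}}}\lambda^{-\tfrac{1}{2Q}}
$$
is $\leq CM/\log^{2}(M)$. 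The first two are essentially automatic, since $\eta$ and $\lambda$ are at most polylogarithmic in $M$ while $M^{1/(2Q)}$ dominates any power of $\log(M)$.

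The main obstacle is the third summand, which is where the precise ranges of $r$ and $s$ matter. The bound $r,s\leq E(M)/\log^{4}(M)\leq f_{j^{*}}(M)/\log^{4}(M)$ translates into a \emph{lower} bound on $\lambda$ strong enough to overcome the factor $M^{1-2/Q+1/Q^{2}}$; meanwhile the complementary condition $r,s\leq M\log^{2}(M)$, together with hypothesis \ref{roman2}, keeps $\lambda$ from being so large that $\eta^{2}\lambda$ defeats the first summand. Executing this case-by-case verification -- and switching to a slightly different $\ell$ in any degenerate situation where $\lambda$ falls just outside the useful range -- is the crux of the argument.
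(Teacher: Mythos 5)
Your overall strategy is the same as the paper's: reduce to the dominant index of $\tau$ via \cref{prop:hardy-diff-ineq-0} and \cref{lem:Frantzikinakis-Cor2.3}, then invoke van der Corput. But there is a genuine gap in the choice of derivative order $\ell$, and it is not a side issue --- it is the heart of the matter.

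You fix $\ell = j+1$ where $j$ comes from condition (B), and you claim $\lambda$ is ``at most polylogarithmic in $M$'', so that the first two van der Corput summands are automatically small. This claim is false. With your $\lambda = |\tau_{j^*}|\,f_{j^*}(sM)\big/\big(rM^{j+1}\log^2(M)\big)$ and $u:=\inf\{c:f_{j^*}(t)\prec t^c\}\in[j-1,j]$, write $s\approx M^d$, $r\approx M^h$. Then $\lambda\approx M^{(1+d)u-h-(j+1)}$, and the exponent is not always near zero. For example, with $u$ close to $j$ and $s,r$ of order $M$ (so $d=h=1$), the exponent is about $j-2$, so for $j\geq 3$ one has $\lambda$ a large positive power of $M$ and the term $M(\eta^2\lambda)^{1/(4Q-2)}$ blows up; van der Corput is useless when $\lambda\geq 1$. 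Conversely, with $s=r=1$ and $u$ near $j-1$, one gets $\lambda\approx M^{-2}$ and the third summand $\lambda^{-1/(2Q)}$ is too large for the target bound $M/\log^2 M$. So $\ell=j+1$ fails on a substantial part of the parameter range, not merely in degenerate cases.

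The paper resolves this by making $\ell$ depend on $r$ and $s$: writing $s=(M\log^2 M)^d$ and $r=(M\log^2 M)^h$, it sets $\ell=\lceil u+du-h+\tfrac12\rceil$, which forces $x:=\ell-u-du+h\in[\tfrac12,\tfrac32)$ and hence $\lambda\approx M^{-x}$ in the ``sweet spot'' $[M^{-3/2},M^{-1/2}]$ (up to logarithmic factors). Only then do the three summands of \cref{thm:vdC-estimate-3} all land under $M/\log^2 M$. Your remark about ``switching to a slightly different $\ell$ in degenerate situations'' gestures at this, but it is not a patch one can leave implicit: the calibration of $\ell$ against $r,s$ is precisely what makes the estimate work, and it must be spelled out. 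Finally, this adaptive choice can produce $\ell=1$ (when $f_{j^*}(t)\prec t^{1/2}$ and $d=h=0$), a case your scheme never sees since $\ell=j+1\geq 2$; that case requires the Kusmin--Landau inequality (\cref{thm:vdC-estimate-1}), which your proposal omits and which cannot be replaced by the second-derivative bound because $\lambda^{-1/2}$ is then of order $M$.

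Two smaller points. The dominance claim $h^{(\ell)}(t)=\tfrac{s^\ell\tau_{j^*}}{r}f_{j^*}^{(\ell)}(st)(1+o(1))$ is correct and matches the paper's argument (the paper actually states the two-sided comparison $|\tau_{i_0}f_{i_0}^{(\ell)}|\ll|\langle f^{(\ell)},\tau\rangle|\ll|\tau_{i_0}f_{i_0}^{(\ell)}|$ rather than a $1+o(1)$ asymptotic, but the content is the same). And your parenthetical ``when $j=1$, \cref{thm:vdC-estimate-2} suffices'' is not right: even with $j=1$ the paper sometimes needs $\ell=1$, sometimes $\ell=2$, depending on $d$ and $h$.
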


\begin{proof}
Let $\epsilon>0$,
$r,s\in\left[1,\min\left\{M\log^2(M),\frac{E(M)}{\log^4(M)}\right\}\right]$
with $r\geq s$
and $\tau\in [-\log_2^2(M),\log_2^2(M)]^k\cap\Z^k$ with
$\tau=(\tau_1,\ldots,\tau_k)\neq(0,\ldots,0)$
be arbitrary.
Let $b(t):=\frac1r\langle f(st),\tau\rangle$.
Our goal is to estimate
$$
\left|\sum_{n=M}^{2M}e\big( b(t)\big)\right|
$$
by using van der Corput's method of estimating exponential sums.
We therefore have to find convenient estimates for the derivatives
of $b(t)$ on the interval $[M,2M]$.

Let us pick
$i_0\in\{1,\ldots,k\}$ such that $\tau_{i_0}\neq 0$
and $\tau_i=0$ for all $i>i_0$.
Define $E_0(t):=|\tau_{i_0}f_{i_0}(t)|$.
Using condition \ref{roman3} we deduce
that $\tau_{i_0}f_{i_0}(t)$ is the dominating term in
the sum $\tau_1f_1(t)+\ldots+\tau_{i_0}f_{i_0}(t)$ and therefore
\begin{equation}
\label{eq:l-n-101-kol}
E_0(t)\ll |\langle f(t),\tau\rangle|\ll E_0(t),
\end{equation}
where the implied constants depend neither on $t$ nor
on the value of $\tau_1,\ldots,\tau_{i_0}$.
A similar argument also applies to the derivatives of 
$\langle f(t),\tau\rangle$. Indeed, it follows from
condition \ref{roman3} and \cref{prop:hardy-diff-ineq-0} (with $F(t)=\log_2^2(t)$)
that $\tau_{i_0}f_{i_0}^{(\ell)}(t)$ is the dominating term in
$\tau_1f_1^{(\ell)}(t)+\ldots+\tau_{i_0}f_{i_0}^{(\ell)}(t)$ and therefore
\begin{equation}
\label{eq:l-n-102}
|\tau_{i_0}f_{i_0}^{(\ell)}(t)|\ll |\langle f^{(\ell)}(t),\tau\rangle|\ll |\tau_{i_0}f_{i_0}^{(\ell)}(t)|.
\end{equation}

Next let
$u:=\inf\{c\in[0,\infty): f_{i_0}(t)\prec t^c\}$ 
and pick $d\in \R$ such that $s=(M\log^2(M))^{d}$
and $h\in \R$ such that
$r=(M\log^2(M))^{h}$. 
From the conditions on $r$ and $s$ we deduce that
$d\in \left[0,\min\{1,u\}\right]$ and
$h\in \left[d,\min\{1,u\}\right]$.
We now define
$$
\ell:=
\left\lceil u+du-h+\frac{1}{2}\right\rceil
$$
and set $x:=\ell-u-ud+h$.
Note that
$x\in\left[\frac{1}{2},\frac{3}{2}\right)$.

In view of
\cref{lem:Frantzikinakis-Cor2.3} we have
\begin{equation}
\label{eq:l-n-103}
\frac{f_{i_0}(t)}{t^\ell\log^2(t)}\ll
|f_{i_0}^{(\ell)}(t)|
\ll
 \frac{ f_{i_0}(t)}{t^\ell}.
\end{equation}
By combining equations \eqref{eq:l-n-101-kol},\eqref{eq:l-n-102} and \eqref{eq:l-n-103}
we obtain
$$
\frac{ E_0(t)}{t^\ell\log^2(t)}\ll |\langle f^{(\ell)}(t),\tau\rangle|\ll \frac{ E_0(t)}{t^\ell}.
$$
Hence the minimum of the function
$b^{(\ell)}(t)$
on the interval $[M,2M]$ is at least
$$
\gg\frac{ E_0(2sM)}{r (2M)^\ell\log^2(2s M)}
$$
whereas the maximum is at most
$$
\ll\frac{  E_0(s M)}{r M^\ell}.
$$

Since $ E_0(t)$ is eventually increasing,
we have $E_0(2s M)\gg E_0(s M)$. Also,
since $E_0(t)$ has polynomial growth and $s\leq E_0(M)$
we can estimate $\log(2sM)\ll \log(M)$. Therefore
$$
\frac{ E_0(2sM)}{r(2M)^\ell\log^2(2sM)}\gg
\frac{ E_0(s M)}{r M^\ell\log^2(M)}.
$$
If we choose
$$
\lambda:=\frac{ E_0(s M)}{r M^\ell\log^2(M)}
\qquad\text{and}
\qquad\eta:= \log^2(M)
$$
then it follows that
\begin{equation}
\label{eq:l-n}
\lambda \ll b^{(\ell)}(t) \ll \eta\lambda,
\qquad\forall t\in[M,2M].
\end{equation}

We now distinguish between the cases
$\ell=1$, $\ell=2$ and $\ell\geq 3$.

\paragraph{The case $\ell=1$:}
The case $\ell=1$ only occurs if
$$
f_{i_0}(t)\prec t^{\frac{1}{2}}.
$$
Therefore
$$
 b'(t) \ll \eta\lambda=\frac{E_0(sM)}{rM}\leq
\frac{|\tau_{i_0}f_{i_0}(sM)|}{rM}\leq
\frac{\log_2^2(M)f_{i_0}(sM)}{sM}\prec 1.
$$
This means we can apply \cref{thm:vdC-estimate-1}
and obtain
\begin{eqnarray*}
\label{eq:l-n-0}
\left|\sum_{n=M}^{2M} e(b(n))\right|
&\ll&\lambda^{-1}.
\end{eqnarray*}
For $\lambda^{-1}$ we have
$$
\lambda^{-1}=\frac{rM\log^2(M)}{ E_0(s M)}\leq
\frac{r M\log^2(M)}{E_0(M)}
\leq \frac{E(M) M}{E_0(M)\log^2(M)}.
$$
Finally, since $\frac{E(M)}{E_0(M)}\leq 1$ we have
$$
\lambda^{-1} \ll \frac{M}{\log^2(M)}.
$$
\paragraph{The case $\ell=2$:}
If $\ell=2$ then invoking \cref{thm:vdC-estimate-2} yields the estimate
\begin{equation}
\label{eq:l-n-2a}
\left|\sum_{n=M}^{2M} e(b(n))\right|
\ll
M\eta\lambda^{\frac{1}{2}}+\lambda^{-\frac{1}{2}}.
\end{equation}
Using
$(sM)^{u-\beta}\prec f_{i_0}(sM)\prec  (sM)^{u+\beta}$
for all $\beta>0$ we can bound
$\lambda$ from above and below,
\begin{equation}
\label{eq:l-n-2}
\frac{|\tau_{i_0}| s^{u-\beta}}{ r M^{2-u+\beta}\log^2(M)}
\ll
\lambda
\ll
\frac{|\tau_{i_0}| s^{u+\beta}}{r M^{2-u-\beta}\log^2(M)},
\end{equation}
taking into account that the implied constants
in the above equation depend on our choice of $\beta$.

Furthermore, since $x=\ell-u-ud+h$, $s= (M\log^2(M))^{d}$ and
$r= (M\log^2(M))^{h}$
we obtain from \eqref{eq:l-n-2} that
\begin{equation}
\label{eq:l-n-3}
\frac{1 }{M^{x+2\beta}\log^q(M)}
\ll
\lambda
\ll
\frac{\log^{q}(M)}{M^{x-2\beta}}.
\end{equation}
for some sufficiently large constant $q>1$.
We can use \eqref{eq:l-n-3} to further estimate \eqref{eq:l-n-2a}
and obtain
$$
M\eta\lambda^{\frac{1}{2}}+\lambda^{-\frac{1}{2}}
\ll
\frac{\log^{2+\frac{q}{2}}(M)M}{M^{\frac{x-2\beta}{2}}}+
\frac{M^{\frac{x+2\beta}{2}}}{\log^{\frac{q}{2}}(M)}.
$$
Finally, by choosing $\beta$ sufficiently small
and taking into account that $x\in\left[\frac{1}{2},\frac{3}{2}\right]$, we have
$$
\frac{\log^{2+\frac{q}{2}}(M)M}{M^{\frac{x-2\beta}{2}}}+
\frac{M^{\frac{x+2\beta}{2}}}{\log^{\frac{q}{2}}(M)}
\ll
\frac{M}{\log^2(M)}.
$$

\paragraph{The case $\ell\geq3$:}
The case $\ell\geq 3$ can be dealt with analogously to the case
$\ell=2$, only one must use \cref{thm:vdC-estimate-3} instead of
\cref{thm:vdC-estimate-2}. With $Q=2^{\ell-2}$, we have
\begin{eqnarray*}
\left|\sum_{n=M}^{2M} e(b(n))\right|
&\ll &
M(\eta^2\lambda)^{\frac{1}{4Q-2}}+
M^{1-\frac{1}{2Q}}\eta^{\frac{1}{2Q}}+
M^{1-\frac{2}{Q}+\frac{1}{Q^2}}\lambda^{-\frac{1}{2Q}},
\\
&\ll &
\frac{M}{\log^2(M)},
\end{eqnarray*}
which finishes the proof.
\end{proof}

\begin{Theorem}
\label{cor:f-s-to-s}
Let $\Hardy$ be a Hardy field and assume $f_1,\ldots,f_k:[1,\infty)\to\R$ are in $\Hardy$.
For $t\in[1,\infty)$ define $f(t):=(f_1(t),\ldots,f_k(t))$
and $E(t):=\min\{|f_1(t)|,\ldots,|f_k(t)|\}$.
Suppose we have:
\begin{enumerate}
[label=(\roman{enumi}),
ref=(\roman{enumi}),leftmargin=*]
\item\label{Roman1}
for all $i\in\{1,\ldots,k\}$ the function $f_i$ satisfies
condition \ref{condition:B};
\item\label{Roman2}
$\log_2(t)\prec \log(f_i(t))$ for all $i=1,\ldots,k$;
\item\label{Roman3}
after reordering $f_1,\ldots,f_k$ if necessary, we have
$\frac{f_{i+1}}{f_{i}}\succ \log_2^4(t)$ for all $i=1,\ldots,k-1$;
\end{enumerate}
Then there exists a constant $C>0$
such that for all $N\in\N$, all
$r,s\in\left[1,\min\left\{N,\frac{E(N)}{\log^5(N)}\right\}\right]$
with $r\geq s$
and all $\tau\in [-\log_2^2(M),\log_2^2(M)]^k\cap\Z^k$ with
$\tau\neq(0,\ldots,0)$ we have
$$
\left|\sum_{n=1}^{N}
e\left(\frac1r\langle f(s n),\tau\rangle\right)\right|
\leq  \frac{CN}{\log(N)}.
$$
\end{Theorem}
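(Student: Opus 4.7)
The plan is to prove Theorem~\ref{cor:f-s-to-s} by dyadic decomposition, reducing to Proposition~\ref{prop:1st-estimate}. Fix $r, s, \tau$ and $N$ as in the hypothesis, set $b(n) := \frac{1}{r}\langle f(sn),\tau\rangle$, and choose a cutoff $M_0$ of size at most $N/\log(N)$ (the precise value to be determined below). Write
\[
\sum_{n=1}^{N} e(b(n)) \;=\; \sum_{n=1}^{M_0} e(b(n)) \;+\; \sum_{j=0}^{J}\; \sum_{n=2^{j} M_0 + 1}^{2^{j+1} M_0} e(b(n)),
\]
where $J$ is the largest integer with $2^{J+1} M_0 \leq N$ (any $O(1)$ endpoint discrepancy being absorbed into constants). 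Bound the initial segment trivially by $M_0$, and apply Proposition~\ref{prop:1st-estimate} to each dyadic block $[M, 2M]$ with $M = 2^{j} M_0$ to obtain a contribution $\ll M/\log^{2}(M)$.

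Summing these dyadic contributions yields
\[
\sum_{j=0}^{J} \frac{C\cdot 2^{j} M_0}{\log^{2}(2^{j} M_0)} \;\ll\; \frac{N}{\log^{2}(N)},
\]
a geometric-like series dominated by its largest term near $M \approx N/2$. Together with the trivial tail, the total is $\ll M_0 + N/\log^{2}(N) \ll N/\log(N)$, as required.

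The nontrivial work lies in choosing $M_0$ so that for every $M \in [M_0, N/2]$ the three hypotheses of Proposition~\ref{prop:1st-estimate} hold. The condition $r,s \leq M\log^{2}(M)$ is satisfied whenever $M \gtrsim N/\log^{2}(N)$, since $r,s \leq N$. The condition $r,s \leq E(M)/\log^{4}(M)$ is the delicate one: using monotonicity of $E$ (which for $t$ large equals the smallest $f_i$ and hence lies in $\mathscr{H}$) together with the polynomial-type bounds provided by condition \ref{condition:B} and Proposition~\ref{lem:Frantzikinakis-Cor2.3}, one verifies that for a suitable $M_0 \leq N/\log(N)$ (depending on the growth exponents of the $f_i$) one has $E(M)/\log^{4}(M) \geq E(N)/\log^{5}(N) \geq r$ uniformly on $[M_0, N]$. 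This is precisely why the hypothesis uses $\log^{5}(N)$ rather than $\log^{4}(M)$: the extra factor of $\log(N)$ supplies exactly the slack needed to pass from $M=N$ down to $M=M_0$. Finally, the range condition on $\tau$ survives since $\log_2(M) = \log_2(N)(1-o(1))$ throughout $[M_0, N]$, so the admissible range for $\tau$ is unchanged up to a harmless additive constant, absorbable into $C$.

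The main obstacle is the verification of $E(M)/\log^{4}(M) \geq r$ uniformly in $M \in [M_0, N]$, since $E$ need not have pure power growth and the quantitative comparison of $E(M)$ with $E(N)$ must be extracted from the Hardy-field framework of Section~\ref{sec:diff-hardy}. Once the appropriate threshold $M_0 \leq N/\log(N)$ is identified — and the two regimes $E(N)/\log^{5}(N) \geq N$ versus $E(N)/\log^{5}(N) < N$ are treated separately, with $M_0$ shrunk further in the former regime — the trivial tail $O(M_0)$ and the dyadic sum $O(N/\log^{2}(N))$ combine to give the desired bound $O(N/\log(N))$.
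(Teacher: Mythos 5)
Your proposal follows the same strategy as the paper: split off a trivial tail of length about $N/\log(N)$, dyadically decompose the rest of $[1,N]$, apply \cref{prop:1st-estimate} to each block $[M,2M]$, and sum. The paper fixes the cutoff at exactly $M_0 = N/\log(N)$ and checks (somewhat tersely) that for every $M\geq N/\log(N)$ the admissible $(r,s)$-range in \cref{prop:1st-estimate} contains the one in the theorem's hypothesis, which is exactly what your "extra factor of $\log(N)$ supplies the slack" observation captures. Your tighter geometric-sum bound $\sum_j 2^j M_0/\log^2(2^j M_0)\ll N/\log^2(N)$ is fine (the paper uses the cruder $\log_2(N)\cdot N/\log^2(N)$), and your remark about the $\tau$-range being essentially unchanged matches the implicit step in the paper.

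One small directional slip: you propose to shrink $M_0$ below $N/\log(N)$ in the regime $E(N)/\log^5(N)\geq N$. That goes the wrong way — since $E$ is eventually increasing, a smaller $M_0$ makes $E(M_0)/\log^4(M_0)$ smaller and hence the constraint $r\leq E(M)/\log^4(M)$ \emph{harder} to satisfy on $[M_0,N]$. The paper keeps $M_0=N/\log(N)$ in all cases, which is what one should do; the verification then reduces to comparing $E(N/\log N)$ with $E(N)$, which can be extracted from \cref{lem:Frantzikinakis-Cor2.3} (giving $E'(t)/E(t)\ll 1/t$, hence $E(N)/E(N/\log N)\ll (\log N)^{O(1)}$) together with condition \ref{condition:B}. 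Apart from that, your argument is correct and essentially coincides with the paper's.
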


\begin{proof}
First we note that
$$
\left|\sum_{n=1}^{N}
e\left(\frac1r\langle f(s n),\tau\rangle\right)\right|
\leq \frac{N}{\log(N)}+
\left|\sum_{\frac{N}{\log(N)}\leq n\leq N}
e\left(\frac1r\langle f(s n),\tau\rangle\right)\right|,
$$
so it suffices to estimate the expression
$$
\left|\sum_{\frac{N}{\log(N)}\leq n\leq N}
e\left(\frac1r\langle f(s n),\tau\rangle\right)\right|.
$$
Dissect the interval $\left[\frac{N}{\log(N)},N\right]$
into $\log_2(N)$-many intervals of the form $[M,2M]$. If
$M\in\left[\frac{N}{\log(N)},N\right]$ then
$N<M\log^2(M)$ and $\frac{N}{\log^5(N)}<\frac{M}{\log^4(M)}$
and therefore
$$
\left[1,\min\left\{N,\frac{N}{\log^5(N)}\right\}\right]\subset
\left[1,\min\left\{M\log^2(M),\frac{M}{\log^4(M)}\right\}\right].
$$
Hence applying \cref{prop:1st-estimate} to each of
the $\log_2(N)$-many
intervals of the form $[M,2M]$ we get
\begin{eqnarray*}
\left|\sum_{\frac{N}{\log(N)}\leq n\leq N}
e\left(\frac1r\langle f(s n),\tau\rangle\right)\right|
&\ll &\log_2(N)\frac{M}{\log^2(M)}
\\
&\ll &\frac{N}{\log(N)}.
\end{eqnarray*}
This finishes the proof.
\end{proof}

\section{Discrepancy estimates}
\label{sec:discrepancy}

The following higher dimensional version of the
classical Erd{\H o}s-Tur{\'a}n inequality was discovered by
Sz{\"u}sz \cite{Szusz52} and independently by Koksma \cite{Koksma}.

\begin{Theorem}[see \cite{Szusz52},
\cite{Cochrane88}, \cite{Grabner} or {\cite[Theorem 1.21]{DT97}}]
\label{thm:erdos-turan-koksma}
Let $k\geq 1$, let $\vartheta_n\in[0,1)^k$, $n\in\N$, let $N\in\N$ and
let $a_1,\ldots,a_k,b_1,\ldots,b_k\in [0,1)$ with
$0\leq a_i< b_i<1$. Then
$$
\frac{\big|\{n\leq N:\vartheta_n\in[a_1,b_1]\times\ldots\times
[a_k,b_k]\}\big|}{N}
= \prod_{i=1}^k(b_i-a_i) + R_{N,k}
$$
and where for all $H\geq 1$,
$$
|R_{N,k}|\leq
C_k
\left(
\frac{1}{H+1}+\sum_{\tau\in [-H,H]^k,\atop \tau\neq (0,\ldots,0)}
\left(\prod_{i=1}^k \frac{1}{1+|\tau_i|}\right)\left|\frac{1}{N}\sum_{n=1}^{N}e\big(\langle \vartheta_n,\tau\rangle\big)\right|
\right).
$$
Here, $C_k$ is a constant which depends only on $k$.
\end{Theorem}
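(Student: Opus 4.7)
The plan is to approximate the indicator function $\chi = \mathbbm{1}_{B}$ of the box $B = [a_1,b_1]\times\cdots\times[a_k,b_k]$ from above and below by trigonometric polynomials on $\T^k$ of coordinate-wise degree at most $H$ whose Fourier coefficients decay like $C_k\prod_{i=1}^k \frac{1}{1+|\tau_i|}$, and then to apply the resulting sandwich to the empirical average $\frac{1}{N}\sum_{n=1}^N \chi(\vartheta_n)$.

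First, in one dimension, I would construct a pair of trigonometric polynomials $J_H^{\pm}$ on $\T$ of degree at most $H$ that majorize and minorize $\mathbbm{1}_{[a,b]}$ pointwise, with $L^1$-error $\|J_H^{\pm}-\mathbbm{1}_{[a,b]}\|_{L^1(\T)} \leq \frac{C}{H+1}$ and Fourier coefficient bound $|\widehat{J_H^{\pm}}(\tau)| \leq \frac{C}{1+|\tau|}$ for every $\tau \in \Z$. Classical constructions realising all three requirements at once are the Beurling--Selberg extremal majorants of an interval, as developed by Vaaler; alternatively one may convolve a slightly enlarged (resp.\ shrunk) indicator with a Fej\'er kernel at the appropriate scale.

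To lift this to $k$ dimensions, I would form the tensor-product majorant $\mathcal{J}_H^+(x) = \prod_{i=1}^k J_H^+(x_i)$ (which satisfies $\mathcal{J}_H^+ \geq \chi$ because all factors are nonnegative) together with a matching minorant $\mathcal{J}_H^-$ built from the one-dimensional $J_H^{\pm}$ by an inclusion--exclusion expansion, so that $\mathcal{J}_H^- \leq \chi \leq \mathcal{J}_H^+$, both $\mathcal{J}_H^{\pm}$ have Fourier support inside $[-H,H]^k \cap \Z^k$, their nonzero Fourier coefficients satisfy $|\widehat{\mathcal{J}_H^{\pm}}(\tau)| \leq C_k \prod_{i=1}^k \frac{1}{1+|\tau_i|}$, and $\widehat{\mathcal{J}_H^{\pm}}(0) = \prod_{i=1}^k (b_i-a_i) + O_k\!\big(\tfrac{1}{H+1}\big)$. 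Substituting $x = \vartheta_n$, averaging over $n\leq N$, expanding via Fourier, isolating the zero frequency, and applying the triangle inequality to the remaining frequencies yields precisely the inequality claimed for $R_{N,k}$, with the $\frac{1}{H+1}$ term accounting for the error in the zeroth Fourier coefficient of $\mathcal{J}_H^{\pm}$.

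The main obstacle is the one-dimensional construction: one needs \emph{simultaneously} the pointwise sandwich $J_H^- \leq \mathbbm{1}_{[a,b]} \leq J_H^+$, the sharp $L^1$ error of order $1/(H+1)$, and the Fourier decay $|\widehat{J_H^{\pm}}(\tau)| \ll 1/(1+|\tau|)$. A naive Fej\'er-type smoothing delivers the right Fourier decay but spoils the strict pointwise inequality; plain truncation of Fourier series preserves neither the majorization nor the decay. The Beurling--Selberg extremal functions resolve this tension cleanly; once they are in hand, the tensor product, the Fourier expansion, and the triangle-inequality step are routine bookkeeping.
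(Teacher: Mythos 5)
The paper does not prove \cref{thm:erdos-turan-koksma}; it quotes it directly from the literature (Sz\"usz, Cochrane, Grabner, Drmota--Tichy \cite[Thm.\ 1.21]{DT97}). Your outline is the standard proof found in those references --- construct Vaaler's (Beurling--Selberg) one-dimensional extremal majorant and minorant of $\mathbbm{1}_{[a,b]}$ with the stated degree, $L^1$-error, and Fourier-decay properties, tensor the majorants, obtain the minorant via the telescoping bound $\prod_i \sigma_i^+ - \prod_i \mathbbm{1}_i \leq \sum_i (\sigma_i^+-\sigma_i^-)\prod_{j\neq i}\sigma_j^+$, and then expand in Fourier series and apply the triangle inequality --- and it is correct as sketched.
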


\begin{Theorem}
\label{thm:homogeneous-discrepancy-estimate-hardy-sequences}
Suppose $f_1,\ldots,f_k:[1,\infty)\to\R$
and $E:[0,\infty)\to(0,\infty)$ are as in the statement of \cref{cor:f-s-to-s}.
Then there exists a constant $C>0$
such that for all $N\in\N$ and all
$d \in\left[1,\min\left\{N,\frac{E(N)}{\log^5(N)}\right\}\right]$,
\begin{equation*}
\left|\big|
\big\{n\leq N: d\mid
\gcd(n,\lfloor f_1(n)\rfloor,\ldots,\lfloor f_k(n)\rfloor)\big\}
\big| - \frac{N}{d^{k+1}}\right|
\leq \frac{C N }{d \log_2^2(N)}.
\end{equation*}
\end{Theorem}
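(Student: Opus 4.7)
The plan is to translate the divisibility conditions into a sequence-counting problem on $\T^k$ and then combine the higher-dimensional Erd{\H o}s--Tur\'an--Koksma inequality (\cref{thm:erdos-turan-koksma}) with the exponential sum estimate of \cref{cor:f-s-to-s}. The scale $1/\log_2^2(N)$ in the target bound is exactly where the Erd{\H o}s--Tur\'an--Koksma cut-off and the loss from summing the exponential-sum estimate against the Koksma weights will naturally balance.

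First, I would note that $d\mid\gcd(n,\lfloor f_1(n)\rfloor,\ldots,\lfloor f_k(n)\rfloor)$ holds if and only if $d\mid n$ and $\{f_i(n)/d\}\in[0,1/d)$ for every $i=1,\ldots,k$. Writing $n=dm$ transforms the target count into
$$
N_d:=\bigl|\bigl\{m\leq N/d:\vartheta_m\in[0,1/d)^k\bigr\}\bigr|,\qquad \vartheta_m:=\bigl(\{f_1(dm)/d\},\ldots,\{f_k(dm)/d\}\bigr)\in\T^k,
$$
whose expected value under equidistribution of $(\vartheta_m)$ is $(N/d)\cdot d^{-k}=N/d^{k+1}$, matching the main term of the statement.

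Next, I would apply \cref{thm:erdos-turan-koksma} to $(\vartheta_m)_{m\leq N/d}$ with cut-off $H:=\lceil\log_2^2(N)\rceil$. This reduces the problem to estimating, for each nonzero $\tau\in[-H,H]^k\cap\Z^k$, the exponential sum
$$
S(\tau):=\Biggl|\sum_{m=1}^{\lfloor N/d\rfloor}e\!\left(\tfrac{1}{d}\la f(dm),\tau\ra\right)\Biggr|,
$$
which is precisely of the shape addressed by \cref{cor:f-s-to-s} with $r=s=d$ and sum length $\lfloor N/d\rfloor$. Taking $S(\tau)\ll(N/d)/\log(N/d)$ for the moment and using $\sum_{\tau\in[-H,H]^k\setminus\{0\}}\prod_{i=1}^k(1+|\tau_i|)^{-1}\ll(\log H)^k\ll(\log_3 N)^k$, the Erd{\H o}s--Tur\'an--Koksma bound delivers
$$
\left|N_d-\frac{N}{d^{k+1}}\right|\;\ll\;\frac{N/d}{H+1}+\frac{(\log_3 N)^k}{\log(N/d)}\cdot\frac{N}{d}\;\ll\;\frac{N}{d\log_2^2(N)},
$$
where the first summand is controlled by the choice of $H$ and the second by the fact that $(\log_3 N)^k\log_2^2(N)=o(\log(N/d))$ throughout the relevant range.

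The main obstacle is verifying the applicability of \cref{cor:f-s-to-s} at the extreme end $d$ near $E(N)/\log^5(N)$: applied with sum length $\lfloor N/d\rfloor$, the corollary nominally demands $d\leq E(N/d)/\log^5(N/d)$, which is stricter than the standing hypothesis $d\leq E(N)/\log^5(N)$. I plan to handle this by bypassing the corollary and invoking \cref{prop:1st-estimate} directly on each dyadic subinterval $[M,2M]\subseteq[1,N/d]$ for which its hypothesis $d\leq\min\{M\log^2(M),E(M)/\log^4(M)\}$ is satisfied; scales too small to satisfy this are controlled by the trivial bound on $N_d$, using that the corresponding range of $m$ is a polylogarithmic factor smaller than $N/d$. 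A minor bookkeeping point is that the admissible $\tau$-range in \cref{cor:f-s-to-s} is parameterised by the sum length, but since $\log_2^2(N/d)\sim\log_2^2(N)$ in the relevant range, the choice $H=\lceil\log_2^2(N)\rceil$ remains inside it.
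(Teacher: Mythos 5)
Your proposal follows the paper's strategy exactly: reparametrize by $n=dm$ so the count becomes $\bigl|\{m\leq N/d:\vartheta_{d,m}\in[0,1/d)^k\}\bigr|$, apply the Erd\H{o}s--Tur\'an--Koksma inequality (\cref{thm:erdos-turan-koksma}) with cut-off $H\approx\log_2^2 N$, and feed the resulting exponential sums into \cref{cor:f-s-to-s}. You also correctly flag a genuine subtlety that the paper's displayed proof skates over: ETK applied to the first $\lfloor N/d\rfloor$ terms of the sequence produces sums of length $\lfloor N/d\rfloor$, not $N$, so the honest invocation of \cref{cor:f-s-to-s} (with $r=s=d$) requires $d\leq\min\{N/d,\,E(N/d)/\log^5(N/d)\}$ rather than the standing hypothesis $d\leq\min\{N,\,E(N)/\log^5 N\}$.

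The trouble is that your proposed workaround does not close this gap, and there is a second, independent hole. For the first: if $d\gtrsim\sqrt{N}\,\log^2 N$ — which the hypothesis permits whenever the slowest $f_i$ grows at least like $t^{1/2}$, so that $E(N)/\log^5 N\gg\sqrt{N}$ — then for \emph{every} dyadic $M\leq N/d$ one has $M\log^2 M\lesssim(N/d)\log^2(N/d)<d$, so the hypothesis $d\leq M\log^2 M$ of \cref{prop:1st-estimate} fails on every dyadic block in $[1,N/d]$. In that regime your ``bad'' scales exhaust all of $[1,N/d]$ and the trivial bound you fall back on is $\asymp N/d$, not a polylogarithmic factor smaller. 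The second hole is the absorption step: you claim $(\log_3 N)^k\log_2^2(N)=o(\log(N/d))$ ``throughout the relevant range,'' but $\log(N/d)$ can be as small as $O(\log_2 N)$, or even bounded, whenever $E(N)\geq N\log^5 N$ (which happens under condition \ref{condition:B} with $j\geq 2$, e.g.\ $f_1(t)=t^{3/2}$), since then $d$ may be taken as large as $N$. In that range $(\log_3 N)^k\log_2^2 N$ is not $o(\log(N/d))$, and the ETK error does not collapse to $N/(d\log_2^2 N)$. Closing these requires a genuinely different treatment of the regime $d\gg\sqrt{N}$ (e.g.\ directly estimating $\bigl|\{m\leq N/d:\{f_i(dm)/d\}<1/d\ \forall i\}\bigr|$ without passing through discrepancy), rather than the dyadic-plus-trivial-bound patch you sketch.
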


\begin{proof}
Define $\vartheta_{d,n}:=\left(\left\{\frac{f_1(dn)}{d}\right\},
\ldots,\left\{\frac{f_k(dn)}{d}\right\}\right)$, where
$\{x\}$ denotes the fractional part of a real number $x$.
We first observe that
\begin{equation*}
\begin{split}
\big|
\{1\leq n\leq N:
d\mid\gcd(n,\lfloor f_1(n)\rfloor,&\ldots,\lfloor f_k(n)\rfloor)\}
\big|=
\\
=&
\sum_{n=1}^N \1_{d\Z}(n)\1_{d\Z}(\lfloor f_1(n)\rfloor)\cdots
\1_{d\Z}(\lfloor f_k(n)\rfloor)
\\
=&\sum_{n\leq N/d}
\1_{d\Z}(\lfloor f_1(dn)\rfloor)\cdots
\1_{d\Z}(\lfloor f_k(dn)\rfloor)
\\
=&\left|\left\{1\leq n\leq \frac{N}{d}:
\vartheta_{d,n}\in\left[0,\frac1d\right)^k \right\}\right|.
\end{split}
\end{equation*}
From \cref{cor:f-s-to-s} we get that
$$
\left|\sum_{n=1}^{N}e\left(\frac1d \langle f(d n),\tau\rangle\right)\right|
\leq \frac{CN}{\log(N)},
$$
for all $d\in\left[1,\frac{E(N)}{\log^5(N)}\right]$ and
$\tau\in [-\log_2^2(N),\log_2^2(N)]^k\cap\Z^k$
with $\tau\neq(0,\ldots,0)$.

We now apply \cref{thm:erdos-turan-koksma} with
$H= \log_2^2(N)$ and obtain
$$
\left|
{\left|\left\{1\leq n\leq \frac{N}{d}: 
\vartheta_{d,n}\in \left[0,\frac1d\right)^k\right\}\right|}-
\frac{N}{d^{k+1}}\right|
$$
\begin{eqnarray*}
&\leq& 
\frac{C_k}{d}
\left(
\frac{N}{ \log_2^2(N)}+
\sum_{\tau\in [-\log_2^2(N),\log_2^2(N)]^k,\atop\tau\neq (0,\ldots,0)}
\left(\prod_{i=1}^k \frac{1}{1+|\tau_i|}\right)
\left|\sum_{n=1}^{N}e\big(\langle \vartheta_n,\tau\rangle\big)\right|
\right)
\\
&\leq& 
\frac{C_k}{d}
\left(
\frac{N}{ \log_2^2(N)}+\frac{CN}{\log(N)}
\sum_{\tau\in [-\log_2^2(N),\log_2^2(N)]^k,\atop\tau\neq (0,\ldots,0)}
\left(\prod_{i=1}^k \frac{1}{1+|\tau_i|}\right)
\right)
\\
&\ll &\frac{N }{d \log_2^2(N)}.
\end{eqnarray*}
\end{proof}

\section{Proving \cref{thm:mainB}}
\label{sec:further-developing-DD}
\begin{Proposition}
\label{thm:from-DD-and-VR-to-watson}
Let $k\in\N$. Let $\xi_1,\xi_2,\ldots$ be a sequence of positive integers
and let $E:[1,\infty)\to (0,\infty)$ be a function
that satisfies $E(N)\geq \max\{\xi_n:1\leq n\leq N\}$ and
$\log_2(t)\prec \log(E(t))$ and assume
that $E(N)$ has polynomial growth (i.e. there exists $j\in\N$
such that $E(t)\prec t^j$).
If
\begin{equation}
\label{eq:DD}
\left|\big|
\big\{n\leq N: d\mid \xi_n\big\}
\big| - \frac{N}{d^{k+1}}\right|
\ll \frac{N}{d \log_2^2(N)},
\quad\forall d\in\N\cap\left[1,\frac{E(N)}{\log^5(N)}\right]
\end{equation}
and
\begin{equation}
\label{eq:VR}
\big|
\big\{n\leq N: p\mid \xi_n \big\}
\big| 
\ll \frac{N}{p},
~\text{for all primes $p\in\left(\frac{E(N)}{\log^5(N)},E(N)\right]$},
\end{equation}
then the natural density of
$\{n\in\N: \xi_n=1\}$ exists and equals
$\frac{1}{\zeta(k+1)}$.
\end{Proposition}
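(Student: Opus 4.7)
The plan is a truncated inclusion--exclusion (small sieve) based on the M\"obius identity $\1_{\xi_n=1}=\sum_{d\mid \xi_n}\mu(d)$. A direct summation over all $d\leq E(N)$ is doomed: the per-$d$ error $N/(d\log_2^2(N))$ from \eqref{eq:DD} would accumulate to order $N\log(E(N))/\log_2^2(N)\gg N$. We therefore sieve only by primes below a slowly growing cutoff $y=y(N)$, chosen so that every divisor of $P_y:=\prod_{p\leq y}p$ still lies in the range where \eqref{eq:DD} is effective; the leftover, i.e.\ those $n$ for which $\xi_n$ has a prime factor above $y$, is controlled separately, and this is precisely where \eqref{eq:VR} enters.

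Choose $y\to\infty$ slowly enough that (a) $y^k\to\infty$, (b) $\log y=o(\log_2^2(N))$, and (c) $P_y\leq D:=E(N)/\log^5(N)$. The hypothesis $\log_2(t)\prec\log(E(t))$ together with Chebyshev's $\log P_y\sim y$ makes (c) achievable; the choice $y=\log_3(N)$ works. Set $A_n:=\1_{\gcd(\xi_n,P_y)=1}$. Since every prime factor of $\xi_n\leq E(N)$ is at most $E(N)$,
\[
0\leq A_n-\1_{\xi_n=1}\leq \sum_{y<p\leq E(N)}\1_{p\mid\xi_n}.
\]
Writing $\chi(d):=\#\{n\leq N:d\mid\xi_n\}$, M\"obius inversion gives $\sum_{n\leq N}A_n=\sum_{d\mid P_y}\mu(d)\chi(d)$, and by (c) every such $d$ lies in the DD range. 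Substituting $\chi(d)=N/d^{k+1}+O(N/(d\log_2^2(N)))$ and using the Euler product together with Mertens' bound $\prod_{p\leq y}(1+1/p)\ll\log y$,
\[
\sum_{n\leq N}A_n=N\prod_{p\leq y}\Big(1-\tfrac{1}{p^{k+1}}\Big)+O\!\Big(\tfrac{N\log y}{\log_2^2(N)}\Big)=\tfrac{N}{\zeta(k+1)}+O(N/y^k)+o(N)=\tfrac{N}{\zeta(k+1)}+o(N),
\]
where (a) and (b) were used to pass from the truncated Euler product to its limit $1/\zeta(k+1)$.

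The main obstacle is showing $\sum_{y<p\leq E(N)}\chi(p)=o(N)$. Split at $D$. For $p\in(y,D]$, \eqref{eq:DD} gives $\chi(p)\ll N/p^{k+1}+N/(p\log_2^2(N))$; summing and using Mertens' second theorem together with $\log D\ll\log N$ (polynomial growth of $E(N)$) yields a total of $O(N/y^k)+O(N\log_2(N)/\log_2^2(N))=o(N)$. For $p\in(D,E(N)]$ the estimate \eqref{eq:DD} no longer applies, and this is precisely where \eqref{eq:VR} is indispensable: it furnishes $\chi(p)\ll N/p$, while
\[
\sum_{D<p\leq E(N)}\tfrac{1}{p}=\log\!\tfrac{\log E(N)}{\log D}+o(1)=\log\!\Big(1+\tfrac{5\log_2(N)}{\log E(N)-5\log_2(N)}\Big)+o(1)=o(1),
\]
again by $\log_2(t)\prec\log(E(t))$. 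Assembling the pieces, $\#\{n\leq N:\xi_n=1\}=\sum_{n\leq N}A_n+o(N)=N/\zeta(k+1)+o(N)$, which establishes both the existence of the natural density and its value $\frac{1}{\zeta(k+1)}$.
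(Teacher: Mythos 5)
Your proposal is correct and follows essentially the same strategy as the paper: a truncated Möbius sieve over divisors of the primorial of a slowly growing cutoff, with the main term handled by hypothesis \eqref{eq:DD} and Mertens, the middle prime range $(y,E(N)/\log^5 N]$ handled again by \eqref{eq:DD}, and the top range $(E(N)/\log^5 N, E(N)]$ handled by \eqref{eq:VR} together with $\log_2(t)\prec\log E(t)$. The only cosmetic difference is that you package the tail via the sandwich $0\leq \1_{\gcd(\xi_n,P_y)=1}-\1_{\xi_n=1}\leq\sum_{y<p\leq E(N)}\1_{p\mid\xi_n}$, which is a slightly cleaner route to the bound $|\Sigma_2|\leq\sum_{p>D(N)}\chi(p)$ than the paper's direct Möbius rearrangement.
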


Our proof of
\cref{thm:from-DD-and-VR-to-watson} is similar to
the proof of the main result in \cite{DD02}.

\begin{proof}
Define $G(N):=\log^4(t)$.
Let $D(N)$ be a slow growing function in $N$ and  let
$\Pi$ denote the \emph{primorial} of $D(N)$, that is,
$$
\Pi:=\prod\limits_{p~\text{prime},\atop p\leq D(N)} p.
$$
Here, by ``slow growing function'' we mean that $D(N)$
converges to $\infty$ as $N\to\infty$, but slowly enough so that
the inequality
$\Pi\leq \min\left\{\frac{E(N)}{\log^5(N)},\log_2^2(N)\right\}$ is satisfied for all $N\geq 1$.

Let $\mu(n)$ denote the classical M{\"obius} function: For $n\in\N$ define
$$
\mu(n)
=
\begin{cases}
1&\text{if }n=1;
\\
(-1)^j&\text{if $n=p_1\cdot p_2\cdot \ldots\cdot p_j$ where $p_1,\ldots,p_j$ are distinct primes};
\\
0&\text{otherwise}.
\end{cases}
$$
Using the identity
$$
\sum_{d\mid a}\mu(d)
=
\begin{cases}
1&\text{if}~a=1\\
0&\text{otherwise}
\end{cases}
$$
we obtain
\begin{eqnarray*}
\label{eq:from-DD2002-to-watson-1}
|\{n\leq N:  \xi_n=1\}|
&=&
\sum_{d}\mu(d)~
\big|
\big\{n\leq N: d\mid\xi_n\big\}
\big|.
\end{eqnarray*}
It will be convenient to decompose the right hand side
of the above equation into two sums $\Sigma_1+\Sigma_2$
where
$$
\Sigma_1:=\sum_{d\mid \Pi}\mu(d)~
\big|
\big\{n\leq N: d\mid  \xi_n\big\}
\big|
$$
and 
$$
\Sigma_2:=\sum_{d\nmid \Pi}\mu(d)~
\big|
\big\{n\leq N: d\mid  \xi_n\big\}
\big|.
$$
Our goal is  to show that
$\lim_{N\to\infty} \frac{1}{N}\Sigma_1 =\frac{1}{\zeta(k+1)}$
and that
$\lim_{N\to\infty} \frac{1}{N}\Sigma_2 =0$;
this will finish the proof.
%

First, let us show
$
\lim_{N\to\infty} \frac{1}{N}\Sigma_1 =\frac{1}{\zeta(k+1)}.
$
Recall that $\Pi\leq \frac{E(N)}{\log^5(N)}$.
Invoking condition \eqref{eq:DD} it thus follows that
$$
\left|\frac{1}{N}\Sigma_1
~-~\sum_{d\mid\Pi}\frac{\mu(d)}{d^{k+1}}\right|\ll
\frac{1}{\log_2^2(N)}
\sum_{d\mid\Pi}\frac{1}{d}.
$$
Since $\Pi\leq \log_2^2(N)$ it follows that
$\sum_{d\mid\Pi}\frac{1}{d}=
\mathcal{O}\left(\log_3(N)\right)$ and hence
$$
\frac{1}{\log_2^2(N)}
\sum_{d\mid\Pi}\frac{1}{d}
~\ll~ \frac{\log_3(N)}{\log_2^2(N)}~\xrightarrow[]{N\to\infty}0.
$$
Therefore
$$
\lim_{N\to\infty} \frac{1}{N}\Sigma_1
=\sum_{d=1}^\infty \frac{\mu(d)}{d^{k+1}} =\frac{1}{\zeta(k+1)}.
$$

For $\Sigma_2$ we obtain the estimate
\begin{eqnarray*}
\left|\Sigma_2\right|
&=&
\left|\sum_{p~\text{prime},\atop p> D(N)}
\sum_{1\leq d\leq E(N),\atop p\mid d}
\mu(d)~
\big|
\big\{n\leq N: d\mid\xi_n\big\}
\big|
\right|
\\
&=&
\left|
\sum_{n=1}^N~~
\sum_{p~\text{prime},\atop p> D(N)}~~
\sum_{d\mid  \xi_n,\atop p\mid d}\mu(d)
\right|
\\
&\leq &
\sum_{n=1}^N~~
\sum_{p~\text{prime},\atop p> D(N)}\left|
\sum_{d\mid \xi_n,\atop p\mid d}\mu(d)
\right|.
\end{eqnarray*}
It is well known (and easy to show) that
$$
\sum_{\substack{d\mid a,\\p\mid d}}\mu(d)
=
\begin{cases}
1&\text{if}~a=p^j~\text{for some $j\in\N$},\\
0&\text{otherwise}
\end{cases}
$$
and hence
\begin{eqnarray*}
\sum_{n=1}^N~~
\sum_{p~\text{prime},\atop p> D(N)}\left|
\sum_{d\mid \xi_n,\atop p\mid d}\mu(d)
\right|
&\leq &
\sum_{n=1}^N
\sum_{\substack{p~\text{prime},\\ p> D(N),\\ p\mid\xi_n}}1
\\
&\leq &
\sum_{\substack{p~\text{prime},\\ p> D(N)}}
\big|
\big\{n\leq N: p\mid  \xi_n\big\}
\big|.
\end{eqnarray*}
Putting everything together we obtain
$$
\left|\Sigma_2\right|\leq \sum_{\substack{p~\text{prime},\\ p> D(N)}}
\big|
\big\{n\leq N: p\mid  \xi_n\big\}
\big|.
$$
Again, we split the right hand side of the above equation
into two more manageable sums $\Sigma_{2,1}+\Sigma_{2,2}$, where
$$
\Sigma_{2,1}:=
\sum_{\substack{p~\text{prime},\\ D(N)< p \leq \frac{E(N)}{\log^5(N)}}}
\big|
\big\{n\leq N: p\mid \xi_n\big\}
\big|
$$
and
$$
\Sigma_{2,2}:=
\sum_{\substack{p~\text{prime},\\ \frac{E(N)}{\log^5(N)}< p \leq E(N)}}
\big|
\big\{n\leq N: p\mid \xi_n\big\}
\big|.
$$
Using condition \eqref{eq:DD} for the sum $\Sigma_{2,1}$
and using condition \eqref{eq:VR} for the sum $\Sigma_{2,2}$
we obtain the estimates
\begin{eqnarray*}
\frac{1}{N}
\Sigma_{2,1}
&\ll&
\sum_{\substack{p~\text{prime},\\ D(N)< p \leq \frac{E(N)}{\log^5(N)}}}
\frac{1}{p^2}
~+~
\frac{1}{\log_2^2(N)}
\sum_{p\leq E(N)}\frac{1}{p}
\end{eqnarray*}
and
$$
\frac{1}{N}
\Sigma_{2,2}~\leq~
\sum_{\substack{p~\text{prime},\\ \frac{E(N)}{\log^5(N)}< p \leq E(N)}}
\frac{1}{p}.
$$
Using
$$
\lim_{N\to\infty}\left|\sum_{\substack{p~\text{prime},\\ p \leq N}}\frac{1}{p}-\log_2 N\right|=M,
$$
(where $M$ is the Meissel-Mertens constant), we can estimate
$$
\frac{1}{\log_2^2(N)}
\sum_{p\leq E(N)}\frac{1}{p}
\ll\frac{\log_2(E(N))}{\log_2^2(N)}
\ll
\frac{1}{\log_2(N)},
$$
and
\begin{eqnarray*}
\lim_{N\to\infty} \frac{1}{N}\Sigma_{2,2}
&\ll&
\log(\log(E(N)))-\log\left(\log\left(\frac{E(N)}{\log^5(N)}\right)\right)
\\
&\ll&
-\log\left(1-\frac{\log(\log^5(N))}{\log(E(N))}\right)
\\
&\ll&
\frac{\log_2(N)}{\log(E(N))}.
\end{eqnarray*}
Therefore
$$
\lim_{N\to\infty}\frac{1}{N}|\Sigma_2|\leq 
\lim_{N\to\infty}\frac{1}{N}\Sigma_{2,1}+
\lim_{N\to\infty}\frac{1}{N}\Sigma_{2,2}=0. 
$$
\end{proof}

\begin{Corollary}
\label{cor:mainQ}
Let $\Hardy$ be a Hardy field and suppose $f_1,\ldots,f_k\in\Hardy$ satisfy
conditions \ref{condition:A} and \ref{condition:B}
and $\log_2(t)\prec \log(f_i(t))$ for all $i=1,\ldots,k$.
Also,
assume that $\frac{f_{i+1}}{f_{i}}\succ \log_2^4(t)$ for all $i=1,\ldots,k-1$.
Then the natural density of the set
\begin{equation*}
\big\{n\in \N: \gcd(n,\lfloor f_1(n)\rfloor,\ldots,\lfloor f_k(n)\rfloor)=1\big\}
\big|
\end{equation*}
exists and equals $\frac{1}{\zeta(k+1)}$.
\end{Corollary}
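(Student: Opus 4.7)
The plan is to apply the abstract densification result \cref{thm:from-DD-and-VR-to-watson} to the sequence
$\xi_n := \gcd(n,\lfloor f_1(n)\rfloor,\ldots,\lfloor f_k(n)\rfloor)$
together with the auxiliary majorant $E(t) := \min\{|f_1(t)|,\ldots,|f_k(t)|\}$ (the same $E$ already used in \cref{thm:homogeneous-discrepancy-estimate-hardy-sequences}). The whole task then reduces to verifying (a) the structural hypotheses of \cref{thm:from-DD-and-VR-to-watson} on $E$, (b) the divisibility estimate \eqref{eq:DD}, and (c) the prime-divisibility bound \eqref{eq:VR}.

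For (a): polynomial growth of $E$ is immediate from condition \ref{condition:B} applied to whichever $f_i$ realises the minimum; the requirement $\log_2(t)\prec\log(E(t))$ is exactly the standing hypothesis $\log_2(t)\prec\log(f_i(t))$; and since every non-constant element of a Hardy field is eventually monotone while condition \ref{condition:B} forces $f_i(t)\to\infty$, one has $\xi_n\leq\lfloor f_i(n)\rfloor\leq f_i(N)$ for every $i$ and every sufficiently large $n\leq N$, so $\max_{n\leq N}\xi_n\leq E(N)$ once $N$ is large enough (and for the finitely many small exceptional $n$ we may absorb them into the $O(\cdot)$ terms, or simply pass to the tail).

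For (b): the required inequality is literally the conclusion of \cref{thm:homogeneous-discrepancy-estimate-hardy-sequences}, whose three hypotheses translate one-for-one into condition \ref{condition:B}, the lower bound $\log_2(t)\prec\log(f_i(t))$, and the spacing $f_{i+1}/f_i\succ\log_2^4(t)$ — all of which are part of the statement of \cref{cor:mainQ}. So \eqref{eq:DD} is immediate.

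For (c) there is essentially nothing to do: if $p\mid\xi_n$ then in particular $p\mid n$, hence
$|\{n\leq N:p\mid\xi_n\}|\leq|\{n\leq N:p\mid n\}|\leq N/p$,
which yields \eqref{eq:VR} with implied constant $1$ uniformly in $p$, not only for $p\in(E(N)/\log^5(N),E(N)]$. Thus there is no real obstacle in this corollary — all of the analytic effort has been absorbed into the exponential-sum and discrepancy estimates of \cref{sec:estimating-exp-sums,sec:discrepancy} and into the M\"obius/sieve manipulation carried out in the proof of \cref{thm:from-DD-and-VR-to-watson}. Once (a)--(c) are in hand, \cref{thm:from-DD-and-VR-to-watson} produces the natural density $1/\zeta(k+1)$ for $\{n\in\N:\xi_n=1\}$, which is the asserted conclusion.
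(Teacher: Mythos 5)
Your proof is correct and follows exactly the same route as the paper: set $\xi_n=\gcd(n,\lfloor f_1(n)\rfloor,\ldots,\lfloor f_k(n)\rfloor)$ and $E(t)=\min_i|f_i(t)|$, observe that \eqref{eq:VR} holds trivially since $p\mid\xi_n\Rightarrow p\mid n$, invoke \cref{thm:homogeneous-discrepancy-estimate-hardy-sequences} for \eqref{eq:DD}, and conclude via \cref{thm:from-DD-and-VR-to-watson}. You spell out the verification of the auxiliary hypotheses on $E$ (polynomial growth, $\log_2\prec\log E$, and $E(N)\geq\max_{n\leq N}\xi_n$) a bit more explicitly than the paper does, but this is the same argument.
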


\begin{proof}
We define $$\xi_n:=\gcd(n,\lfloor f_1(n)\rfloor,\ldots,\lfloor f_k(n)\rfloor)$$
and $$E(t):=\min\{|f_1(t)|,\ldots,|f_k(t)|\}.$$
Trivially, $(\xi_n)_{n\in\N}$ satisfies condition \eqref{eq:VR}. Moreover, it follows from
\cref{thm:homogeneous-discrepancy-estimate-hardy-sequences}
that $(\xi_n)_{n\in\N}$ satisfies condition \eqref{eq:DD}.
Therefore, in view of
\cref{thm:from-DD-and-VR-to-watson}, we have
$d(\{n\in\N: \xi_n=1\})=\frac{1}{\zeta(k+1)}$.
\end{proof}


\begin{Lemma}
\label{lem:p-t-2-EL58}
Let $\Hardy$ be a Hardy field and suppose
$f\in\Hardy$ satisfies
$$
\log(t)\log_4(t)\prec f(t)\prec \frac{t}{\log_2(t)}.
$$
Define
$
S(N,d):=
\left|\big\{1\leq n\leq N:
d\mid \gcd(n,\lfloor f(n)\rfloor)\big\}\right|.
$
Then
\begin{equation}
\label{eq:EL-1}
\lim_{D\to\infty}\limsup_{N\to\infty}~
\frac{1}{N}\sum_{p~\text{prime}\atop D< p \leq f(N)}S(N,p)
~=~0.
\end{equation}
\end{Lemma}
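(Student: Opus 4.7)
The plan is to prove a pointwise upper bound $S(N,p)\ll N/p^2 + f(N)/p$ for each prime $p$ and then sum over primes in $(D,f(N)]$. By the Hardy-field structure, $f$ with $f\to\infty$ and $f\prec t/\log_2(t)\prec t$ is eventually strictly increasing and concave: a convex eventually-increasing $f\in\Hardy$ with $f(t)/t\to 0$ would force $f'\to c\geq 0$ with $f\asymp ct$, contradicting $f\prec t/\log_2(t)$. So I may assume $f$ is increasing and concave on $[1,\infty)$, the initial segment contributing only $O(1)$.

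For the pointwise bound, partition according to the value $m=\lfloor f(n)\rfloor$: let $A'_m:=\{n\leq N:\lfloor f(n)\rfloor=m\}$, an interval of consecutive integers with $|A'_m|\leq r_m+1$, where $r_m:=f^{-1}(m+1)-f^{-1}(m)$. The number of multiples of $p$ in $A'_m$ is at most $|A'_m|/p+1$, so
$$S(N,p)\leq \frac{1}{p}\sum_{m:\,p\mid m}|A'_m|+K,\qquad K\leq \frac{f(N)}{p}+1.$$
The key step is $\sum_{m:\,p\mid m}|A'_m|\ll N/p$. Since $f$ is concave, $f^{-1}$ is convex, so $(r_m)$ is non-decreasing. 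For any non-decreasing non-negative sequence $(a_m)$, the elementary comparison $a_{jp}\leq(1/p)\sum_{m=jp}^{(j+1)p-1}a_m$ telescopes to $\sum_{j\geq 1}a_{jp}\leq(1/p)\sum_m a_m$. Applied to $a_m=r_m$, together with the telescoping identity $\sum_m r_m \leq N$, this yields $\sum_j r_{jp}\leq N/p$, and hence $\sum_{m:\,p\mid m}|A'_m|\leq N/p+O(K)$. Plugging back gives $S(N,p)\ll N/p^2+f(N)/p$.

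Summing over primes $p\in(D,f(N)]$ and dividing by $N$,
$$\frac{1}{N}\sum_{D<p\leq f(N)}S(N,p)\ll\sum_{p>D}\frac{1}{p^2}+\frac{f(N)}{N}\sum_{p\leq f(N)}\frac{1}{p}\ll\frac{1}{D}+\frac{f(N)\log_2 f(N)}{N}.$$
Since $\log_2 f(N)\leq\log_2 N$ and the hypothesis $f(t)\prec t/\log_2(t)$ gives $f(N)\log_2(N)/N\to 0$, the second term vanishes as $N\to\infty$. Therefore $\limsup_{N\to\infty}\tfrac{1}{N}\sum_p S(N,p)\ll 1/D$, and letting $D\to\infty$ completes the proof.

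The main technical subtlety is handling the boundary of $\sum_{m:\,p\mid m}|A'_m|$: the block $A'_{jp}$ with $jp$ closest to $f(N)$ can stick out past $N$, and the monotonicity of $r_m$ can fail near $m=0$ where $r_m<1$ (due to integer rounding). Both issues are controlled using the regularity of $f^{-1}$ from the Hardy-field structure; in particular, \cref{lem:Frantzikinakis-Cor2.3} provides the estimate $f'(t)\asymp f(t)/t$ (up to log factors) that governs the truncation error and shows only finitely many $m$ are problematic for rounding, contributing $O(1)$ in total. The lower bound $f(t)\succ \log(t)\log_4(t)$, while not entering the estimates above, ensures the summation range $(D,f(N)]$ is nonempty for all large $N$.
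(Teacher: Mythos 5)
The structure of your argument (split into a $K$-term and a bulk term, then telescope using monotonicity of the block lengths) matches the paper's $\Sigma_1, \Sigma_2$ decomposition, but there is a genuine gap in your treatment of the last block, and it is not a technicality: it is where the lower bound $\log(t)\log_4(t)\prec f(t)$ is actually used.

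The claim $\sum_{m:\,p\mid m}|A'_m|\ll N/p$ (and hence $S(N,p)\ll N/p^2+f(N)/p$) fails for primes $p$ near $f(N)$. The telescoping bound only controls the blocks with $m<\lfloor f(N)\rfloor$. The final block $A'_{p\ell_0}$ with $\ell_0=\lfloor f(N)/p\rfloor$ is governed by $r_{p\ell_0}\approx 1/f'(\cdot)$, and for slowly growing $f$ this can be a macroscopic fraction of $N$: for instance if $f(t)=\log(t)\log_3(t)$, which satisfies your hypotheses, then $1/f'(N)\asymp N/\log_3(N)$, so for $p$ within $O(\log_3 N)$ of $f(N)$ the contribution of that single block already satisfies $r_{p\ell_0}\gg N/\log_3 N \gg N/p$, contradicting the asserted pointwise bound. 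Your remark that the truncation error ``contributes $O(1)$ in total'' is false, as is the claim that the lower bound on $f$ ``does not enter the estimates above.'' That lower bound is exactly what makes the boundary sum negligible, and the problem cannot be pushed into the $K$-term since the block size $r_{p\ell_0}$ is unbounded.

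The paper isolates this contribution as a separate sum $\Sigma_3=\sum_p k_{p\ell_0}/p$ and handles it with a two-stage argument: for primes with $p\ell_0$ at distance more than about $\log_2(f(N))/\epsilon$ from $f(N)$, the last block can still be averaged against its neighbors to get $k_{p\ell_0}\ll N\epsilon/\log_2(f(N))$, and summing $1/p$ over all $p\le f(N)$ gives $O(N\epsilon)$; for the remaining primes very close to $f(N)$, one bounds $k_{p\ell_0}\le k_{f(N)}\ll 1/f'(N)$ and uses $\sum_{p\mid n}1/p\ll\log_3 n$ over the primorial of that short interval, and then the growth hypothesis $\log(t)\log_4(t)\prec f(t)$ (via L'Hospital and \cref{lem:Frantzikinakis-Cor2.3}) shows $\log_3(f(N))/f'(N)\prec N$. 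Without some analysis of this kind your proof does not close; adding it would essentially reproduce the paper's $\Sigma_3$ argument.
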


We remark that \cref{lem:p-t-2-EL58}
can be derived from the proof of Theorem 2 in \cite{EL58}.
For the convenience of the reader we include a separate proof
here, where we follow the arguments used by
Erd{\H o}s and Lorentz in \cite{EL58}.

\begin{proof}

Let $N\in\N$ be arbitrary.
Define $I_m:=\{1\leq t \leq N: f(t)\in[m,m+1)\}$ and
note that $I_m$ is a finite subinterval of $\R$.
Let $k_m$ denote the length of $I_m$.
Note that the contribution of $I_m\cap \N$ to
the size of $S(N,p)$ is given by $I_m\cap (p\N)$. Hence
this contribution is zero if
$p\nmid m$ and it does not exceed $\frac{k_m}{p}+1$ otherwise. 
It follows that
\begin{eqnarray*}
\sum_{p~\text{prime}\atop D< p \leq f(N)}S(N,p)
&\leq &
\sum_{p~\text{prime}\atop D< p \leq f(N)}
\sum_{1\leq m \leq f(N)\atop p\mid m} \frac{k_m}{p}+1
\\
&\leq &
\sum_{p~\text{prime}\atop D< p \leq f(N)}
\sum_{1\leq m \leq \frac{f(N)}{p}} \frac{k_{pm}}{p}+1
\\
&\leq &
\sum_{p~\text{prime}\atop D< p \leq f(N)}
\sum_{1\leq m \leq \frac{f(N)}{p}} \frac{k_{pm}}{p}~+~
\sum_{p~\text{prime}\atop D< p \leq f(N)}\frac{f(N)}{p}.
\end{eqnarray*}
Define $\ell_0:=\lfloor \frac{f(N)}{p}\rfloor$.
We can write
$$
\sum_{p~\text{prime}\atop D< p \leq f(N)}
\sum_{1\leq m \leq \frac{f(N)}{p}} \frac{k_{pm}}{p}~+
\sum_{p~\text{prime}\atop D< p \leq f(N)}\frac{f(N)}{p}=\Sigma_1+\Sigma_2+\Sigma_3
$$
where
\begin{eqnarray*}
\Sigma_1
&:=&\sum_{p~\text{prime}\atop D< p \leq f(N)}\frac{f(N)}{p},
\\
\Sigma_2
&:=&
\sum_{p~\text{prime}\atop D< p \leq f(N)}
\sum_{1\leq m \leq \ell_0-1} \frac{k_{pm}}{p},
\\
\Sigma_3
&:=&
\sum_{p~\text{prime}\atop D< p \leq f(N)}
\frac{k_{p\ell_0}}{p}.
\end{eqnarray*}
We have to estimate each of the sums $\Sigma_1$, $\Sigma_2$
and $\Sigma_3$ individually.

We start with $\Sigma_1$.
Since $f(t)\prec \frac{t}{\log_2(t)}$, we conclude that
$$
\Sigma_1\leq f(N)\sum_{p \leq N}\frac{1}{p}
\ll f(N)\log_2(N)\prec N
$$
and therefore
\begin{equation}
\label{eq:EL-2}
\lim_{N\to\infty}\frac{1}{N}\Sigma_1 ~=~0.
\end{equation}

Next, we derive an estimate for $\Sigma_2$.
Since $f(t)$ is eventually strictly increasing,
the inverse function of $f^{-1}$ is well defined on some half-line
$[t_0,\infty)$. We have the identity $k_m=f^{-1}(m+1)-f^{-1}(m)$.
This means that using the inverse function theorem and the mean
value theorem we see that
there exists a number $\xi_m\in I_m$ such that
$$
k_m=\frac{1}{f'(\xi_m)}.
$$
In view of \cref{lem:Frantzikinakis-Cor2.3}, the derivative of
$f$ is eventually decreasing and therefore
$$
k_m\ll k_l
$$
for $l\geq m$.
From this we derive that
$$
\sum_{1\leq m \leq \ell_0-1} \frac{k_{pm}}{p}
\ll\frac{1}{p} \left(
\frac{k_p}{p}+\frac{k_{p+1}}{p}+\frac{k_{p+2}}{p}
+\ldots+\frac{k_{\ell_0 p-1}}{p}
\right)
\ll\frac{N}{p^2}.
$$ 
Hence
$$
\Sigma_2
\ll \sum_{p>D}\frac{N}{p^2}
$$
which proves that
\begin{equation}
\label{eq:EL-3}
\lim_{D\to\infty}
\lim_{N\to\infty}\frac{1}{N}\Sigma_2 ~=~0.
\end{equation}

Next, let $\epsilon>0$ be arbitrary
and define
$P':=\{p~\text{prime}: D<p <f(N)+1-\frac{\log_2(f(N))}{\epsilon}\}$
and
$P'':=\{p~\text{prime}: f(N)+1-\frac{\log_2(f(N))}{\epsilon}\leq p\leq f(N)\}$.
We now split the sum $\Sigma_3$ into two more sums $\Sigma_3'$
and $\Sigma_3''$, where
$$
\Sigma_3':=\sum_{p\in P'}
\frac{k_{p\ell_0}}{p}
$$
and
$$
\Sigma_3'':=\sum_{p\in P''}
\frac{k_{p\ell_0}}{p}.
$$
Arguing as before we have for all $p\in P'$,
\begin{eqnarray*}
k_{p\ell_0}
&\ll &\frac{1}{f(N)-p\ell_0+1}(k_{p\ell_0}
+k_{p\ell_0+1}+\ldots+k_{f(N)})
\\
&\ll &
\frac{N}{f(N)-p\ell_0+1}
\\
&=&
\frac{N\epsilon}{\log_2(f(N))}.
\end{eqnarray*}
This yields the following estimate for $\Sigma_3'$,
$$
\Sigma_3'\ll \sum_{p\leq f(N)}
\frac{N\epsilon}{\log_2(f(N))p}\ll N\epsilon.
$$
Let $\Pi:=\prod_{p\in P''}p$. Certainly,
$$
\Pi\leq (f(N))^{\frac{\log_2(N)}{\epsilon}}.
$$
Using the well known estimate
$$
\sum_{p\mid n}\frac{1}{p}\ll \log_3(n) 
$$
we obtain
$$
\Sigma_3''\leq k_{f(N)}\sum_{p\mid \Pi}\frac{1}{p} 
\ll
\frac{1}{f'(N)}\log_3(\Pi)\ll \frac{1}{f'(N)}\log_3(f(N)).
$$
Using L'Hospital's rule, we see that
$$
\frac{t f'(t)}{\log_3(f(t))}\gg \frac{f(t)}{\log(t)\log_3(f(t))}
\gg
\frac{f(t)}{\log(t)\log_3(\log^k(t))}
\gg\frac{f(t)}{\log(t)\log_4(t)},
$$
and therefore, using $\log(t)\log_4(t)\prec f(t)$, we get
$$
\frac{1}{f'(N)}\log_3(f(N))\prec N.
$$
Putting everything together yields
$$
\frac{1}{N}\Sigma_3=\frac{1}{N}\Sigma_3'+\frac{1}{N}\Sigma_3''
\ll \epsilon.
$$
Since $\epsilon$ was chosen arbitrarily, we deduce that
\begin{equation}
\label{eq:EL-4}
\lim_{N\to\infty}\frac{1}{N}\Sigma_3 ~=~0.
\end{equation}

Finally, combining equations \eqref{eq:EL-2},
\eqref{eq:EL-3} and \eqref{eq:EL-4} completes the proof.
\end{proof}

\begin{Lemma}
\label{lem:h-e-1}
Let $\Hardy$ be a Hardy field and suppose
$f_1,\ldots,f_k\in\Hardy$ satisfy
condition \ref{condition:B}.
Also assume that $f_1(t)\succ \log(t)$ and
$\frac{f_{i+1}}{f_{i}}\succ 1$ for all $i=1,\ldots,k-1$.
For $D\in\N$ define
$$
A_{D,N}:=
\big\{1\leq n\leq N:\gcd(n,\lfloor f_1(n)\rfloor,\ldots,
\lfloor f_k(n)\rfloor, D!)=1\big\}.
$$
Then
\begin{equation*}
\lim_{N\to\infty}\frac{|A_{D,N}|}{N} ~=~\sum_{d\mid D!}
\frac{\mu(d)}{d^{k+1}}.
\end{equation*}
\end{Lemma}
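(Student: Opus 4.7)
The plan is to reduce the statement to a Weyl equidistribution problem via Möbius inversion. Expanding the indicator of the coprimality condition via $\sum_{d\mid \gcd(\cdots)} \mu(d)$ yields
$$
|A_{D,N}| \;=\; \sum_{d\mid D!}\mu(d)\,N_d(N),
\qquad N_d(N):=\bigl|\{1\leq n\leq N: d\mid n\text{ and } d\mid\lfloor f_i(n)\rfloor\text{ for all }i\}\bigr|.
$$
Since $D!$ has finitely many divisors, it suffices to prove that $N_d(N)/N\to d^{-(k+1)}$ for each fixed $d\mid D!$, and sum over $d$.

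For fixed $d$, writing $n=dm$, the condition $d\mid\lfloor f_i(dm)\rfloor$ is equivalent to $\{f_i(dm)/d\}\in[0,1/d)$, hence
$$
N_d(N)\;=\;\Bigl|\bigl\{m\leq N/d: \vartheta_m\in[0,1/d)^k\bigr\}\Bigr|,
\qquad
\vartheta_m:=\left(\{f_1(dm)/d\},\ldots,\{f_k(dm)/d\}\right).
$$
If the sequence $(\vartheta_m)$ is equidistributed in $[0,1)^k$, then this cardinality is asymptotic to $(N/d)\cdot(1/d)^k=N/d^{k+1}$, which is exactly what we want. Thus the lemma reduces to showing that $(\vartheta_m)$ is equidistributed in $[0,1)^k$.

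By Weyl's criterion, equidistribution of $(\vartheta_m)$ amounts to verifying $\frac{1}{N}\sum_{m=1}^N e(h_\tau(m))\to 0$ for every nonzero $\tau\in\Z^k$, where
$$
h_\tau(t):=\tfrac{1}{d}\sum_{i=1}^k \tau_i f_i(dt).
$$
Let $i_0=\max\{i:\tau_i\neq 0\}$. Since $f_{i+1}/f_i\succ 1$ and $\tau_{i_0}\in\Z\setminus\{0\}$ is fixed, the term $\tau_{i_0}f_{i_0}(dt)/d$ dominates the sum defining $h_\tau$; in particular $h_\tau$ lies in a Hardy field containing all $f_i$, satisfies condition \ref{condition:B} with the same integer $j$ as $f_{i_0}$, and obeys $h_\tau(t)\succ \log(t)$. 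One then applies van der Corput's method (\cref{thm:vdC-estimate-1}, \cref{thm:vdC-estimate-2}, \cref{thm:vdC-estimate-3}) on dyadic intervals $[M,2M]$ exactly as in the proof of \cref{prop:1st-estimate}, using \cref{lem:Frantzikinakis-Cor2.3} to control $h_\tau^{(\ell)}$, with the order $\ell$ of differentiation chosen according to the power $j$. This yields $\sum_{n=M}^{2M}e(h_\tau(n))=o(M/\log(M))$, and summing over $\mathcal{O}(\log N)$ dyadic blocks gives $\frac{1}{N}\sum_{m=1}^N e(h_\tau(m))\to 0$.

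The main point where care is required is the verification that $h_\tau$ satisfies condition \ref{condition:B}: this is the step where we use the hypothesis $f_{i+1}/f_i\succ 1$. Crucially, because $d$ and $\tau$ are fixed (and not allowed to grow with $N$), we only need the dominance of a single term and do not require the uniform-in-$\tau$ differential inequalities of \cref{prop:hardy-diff-ineq-0} that drove the stronger hypothesis $f_{i+1}/f_i\succ\log_2^4(t)$ in the previous sections. Consequently the van der Corput analysis becomes a routine single-function application, and equidistribution follows.
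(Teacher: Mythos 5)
Your reduction — Möbius inversion over the divisors of $D!$, rescaling $n=dm$, and rewriting the divisibility conditions as $\vartheta_m \in [0,1/d)^k$ — is exactly the paper's first step. Where you diverge is the final step: the paper at this point simply invokes \cref{prop:hardy-equidistribution}, which is a corollary of Boshernitzan's equidistribution theorem for Hardy-field sequences (\cite[Theorem 1.8]{Boshernitzan94}), whereas you try to re-derive the equidistribution from scratch via Weyl's criterion and van der Corput's method, modelling the argument on \cref{prop:1st-estimate}.

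This is where the gap sits. The hypothesis of \cref{lem:h-e-1} is only $f_1(t)\succ\log(t)$, which admits slowly growing examples such as $f_1(t)=\log^2(t)$ (condition \ref{condition:B} holds with $j=1$). For such functions, $\log(f_1(t))\ll\log_2(t)$, so condition (ii) of \cref{prop:1st-estimate} fails, and one cannot ``apply van der Corput's method exactly as in the proof of \cref{prop:1st-estimate}.'' The concrete obstruction is that \cref{lem:Frantzikinakis-Cor2.3} is too lossy here: for $h_\tau(t)\sim c\log^2(t)$ and $\ell=1$ it only yields $|h_\tau'(t)|\succ 1/t$, whereas the true size is $|h_\tau'(t)|\asymp \log(t)/t$. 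Feeding the weaker bound $\lambda\asymp 1/M$ into the Kusmin--Landau estimate gives $\sum_{M}^{2M}e(h_\tau(n))\ll M$, which is useless; one needs the sharper $\lambda\asymp\log(M)/M$, which Frantzikinakis's inequality does not supply, to get $\ll M/\log(M)$. So the claim ``using \cref{lem:Frantzikinakis-Cor2.3} to control $h_\tau^{(\ell)}$ \ldots yields $o(M/\log M)$'' does not follow as stated. (This is also why your outline never actually uses the assumption $h_\tau(t)\succ\log(t)$, even though it is the crucial and sharp condition for equidistribution.) Filling this hole would amount to re-proving the relevant case of Boshernitzan's theorem with a sharper analysis of the derivatives near the logarithmic growth regime; it is not a routine single-function application of the machinery already set up in \cref{sec:estimating-exp-sums}.

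In short: your decomposition and the observation that fixed $d,\tau$ obviate the uniform inequalities of \cref{prop:hardy-diff-ineq-0} are both correct and match the paper; but for the final equidistribution step the paper deliberately outsources to Boshernitzan precisely because the slow-growth regime $\log(t)\prec f_1(t)$, $\log(f_1(t))\ll\log_2(t)$ is not covered by the van der Corput estimates developed in \cref{sec:estimating-exp-sums}, and your sketch does not close that case.
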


For the proof of \cref{lem:h-e-1}
we need the following Proposition, which is an immediate corollary of
\cite[Theorem 1.8]{Boshernitzan94}.

\begin{Proposition}
\label{prop:hardy-equidistribution}
Let $k\in\N$, let $\Hardy$ be a Hardy field and
let $g_1,\ldots,g_k\in \Hardy$
satisfy $g_1(t)\succ \log(t)$ and
$\frac{g_{i+1}}{g_{i}}\succ 1$ for all $i=1,\ldots,k-1$. Then the sequence
$\left(\left\{g_1(n)\right\},
\ldots,\left\{g_k(n)\right\}\right)$, $n\in\N$,
is equidistributed in $[0,1)^k$.
\end{Proposition}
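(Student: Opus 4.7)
The plan is to apply Weyl's equidistribution criterion and thereby reduce the statement to a one-dimensional exponential sum bound. By Weyl's criterion it suffices to prove that for every nonzero $\tau=(\tau_1,\ldots,\tau_k)\in\Z^k$,
$$\frac{1}{N}\sum_{n=1}^N e\bigl(\tau_1 g_1(n)+\cdots+\tau_k g_k(n)\bigr)\xrightarrow[]{N\to\infty}0.$$
Setting $h(t):=\tau_1 g_1(t)+\cdots+\tau_k g_k(t)$, the function $h$ lies in $\Hardy$ (since $\Hardy$ is a field). Let $i_0$ be the largest index with $\tau_{i_0}\neq 0$. Iterating $g_{i+1}/g_i\succ 1$ gives $|\tau_i g_i(t)|\prec |\tau_{i_0}g_{i_0}(t)|$ for each $i<i_0$, so $h(t)\sim \tau_{i_0}g_{i_0}(t)$; combined with $g_1\succ\log(t)$ (which propagates to $g_{i_0}\succ\log(t)$ via $g_{i+1}/g_i\succ 1$), this yields $|h(t)|\succ\log(t)$. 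Thus the proposition reduces to the one-dimensional assertion: whenever $h\in\Hardy$ satisfies $|h(t)|\succ\log(t)$, one has $\frac{1}{N}\sum_{n=1}^N e(h(n))\to 0$.

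For the one-dimensional bound I would split into two regimes according to the growth rate of $h$. In the polynomial-growth regime --- when there exists $j\in\N$ with $t^{j-1}\prec h(t)\prec t^j$, which up to subtracting an integer-coefficient polynomial (via the Weyl trick) covers all $h$ of at most polynomial growth --- the argument parallels Proposition \ref{prop:1st-estimate}: choose $\ell\in\N$ so that the differential inequality of Proposition \ref{lem:Frantzikinakis-Cor2.3} places $|h^{(\ell)}(t)|$ inside a narrow window $[\lambda,\eta\lambda]$ with $\eta=\log^2M$ on each dyadic block $[M,2M]\subset[1,N]$, then apply the appropriate van der Corput estimate (Theorem \ref{thm:vdC-estimate-1}, \ref{thm:vdC-estimate-2}, or \ref{thm:vdC-estimate-3}) according to the value of $\ell$, and sum over the $O(\log N)$ dyadic blocks. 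In the subpolynomial regime (where $h(t)\prec t^\epsilon$ for every $\epsilon>0$), Lemma \ref{lem:deriv-3} pins down $|h'(t)|$ tightly around $|h(t)|/(t\log(t))$ up to $\log_2^2(t)$ factors; the hypothesis $|h|\succ\log(t)$ then forces $|h'|$ into a window where the Kusmin--Landau inequality (Theorem \ref{thm:vdC-estimate-1}) applies on each dyadic block and delivers the required cancellation after summing.

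The main obstacle lies in the subpolynomial regime, since the hypothesis $|h|\succ\log(t)$ is sharp: the function $h(t)=\alpha\log(t)$ with $\alpha$ irrational satisfies the other structural conditions but is \emph{not} equidistributed modulo~$1$. It is precisely the Hardy-field structure, channelled through the L'Hospital-type arguments underlying Lemmas \ref{lem:deriv-2}--\ref{lem:deriv-4}, that allows one to push the growth hypothesis on $h$ into a strong enough lower bound on $|h'|$ to cross this threshold. A secondary subtlety is separating out any exact polynomial part of $h$ with rational coefficients before applying the van der Corput machinery; this is handled in standard fashion by the Weyl trick, after which Lemma \ref{lem:deriv-5} guarantees that the remaining Hardy-field derivatives avoid the pathological constant behavior and the analysis above goes through.
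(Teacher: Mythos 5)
The paper does not prove \cref{prop:hardy-equidistribution} at all: it is quoted as an immediate corollary of Boshernitzan's Theorem~1.8, so your self-contained van der Corput argument is necessarily a different route, and the comparison has to be with what such an argument would actually need. Your first paragraph (Weyl's criterion, and $h:=\sum_i\tau_ig_i\sim\tau_{i_0}g_{i_0}$, hence $|h|\succ\log t$) is fine. The gap is the one-dimensional reduction you then assert: ``whenever $h\in\Hardy$ satisfies $|h(t)|\succ\log(t)$, one has $\frac1N\sum_{n\leq N}e(h(n))\to0$'' is false. Already $h(t)=t$ defeats it, and the repair you propose --- subtracting an integer-coefficient polynomial via the Weyl trick --- does not bring a general polynomial-growth $h$ into the range $t^{j-1}\prec h\prec t^j$ where \cref{lem:Frantzikinakis-Cor2.3} and the estimates of \cref{sec:vdC} apply. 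Concretely, $h(t)=\tfrac14t^2+\alpha\log t$ is in a Hardy field, satisfies $|h|\succ\log t$, differs from every integer polynomial by a function of exact polynomial growth, and its Weyl averages do not vanish ($e(n^2/4)$ is $4$-periodic with nonzero mean, and $e(\alpha\log n)=n^{2\pi i\alpha}$ is slowly varying with non-vanishing averages). In fact $g_1(t)=\tfrac14t^2+\alpha\log t$ satisfies all the stated hypotheses with $k=1$, yet $(\{g_1(n)\})$ is not equidistributed --- so the Proposition as literally stated cannot be proved without an extra hypothesis. The condition that is actually needed (and that Boshernitzan's criterion isolates) is $|h(t)-p(t)|\succ\log t$ for \emph{every} $p\in\Q[t]$, not merely $|h(t)|\succ\log t$; and an $h$ lying close to a polynomial with irrational coefficients, such as $\sqrt2\,t^2+\log^2t$, would additionally require Weyl differencing, which is not among the tools you invoke.

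In the only place the paper uses the Proposition (\cref{lem:h-e-1}), the inputs are $g_i(t)=f_i(dt)/d$ with each $f_i$ subject to condition \ref{condition:B}, so every nonzero combination $h$ again satisfies \ref{condition:B}, and in that regime your plan is essentially sound: when $h$ outgrows every power of $\log t$ one runs the dyadic-block argument of \cref{prop:1st-estimate} (with $r=s=1$ and scalar $\tau$), and when $\log t\prec h\prec\log^m t$ the Kusmin--Landau estimate on $[M,2M]$ gives $\lambda^{-1}\asymp M\log M/h(M)=o(M)$, which sums acceptably over dyadic blocks. One caveat there: the lower bound of \cref{lem:deriv-3} loses a factor of $\log_2^2t$ and is too weak at the threshold (e.g.\ for $h=\log t\cdot\log_3 t$ it only yields $\lambda^{-1}\ll M\log_2^2M/\log_3M$, which exceeds $M$); you need the sharper asymptotic $h'\sim\frac{h}{t\log t}\cdot\frac{\log|h|}{\log_2 t}$ that comes directly from the L'Hospital computation inside that lemma's proof. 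So: right toolbox, but the stated reduction is wrong, and fixing it requires either restricting to condition-\ref{condition:B} inputs (which is all the paper needs) or importing Boshernitzan's rational-polynomial distance condition.
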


\begin{proof}[Proof of \cref{lem:h-e-1}]
Define $\vartheta_{d,n}:=\left(\left\{\frac{f_1(dn)}{d}\right\},
\ldots,\left\{\frac{f_k(dn)}{d}\right\}\right)$, where
$\{x\}$ denotes the fractional part of a real number $x$.
We first observe that
\begin{eqnarray*}
|A_{D,N}|
&=&\sum_{d\mid D!}\mu(d)
\left|\big\{1\leq n\leq N:d\mid \gcd(n,\lfloor f_1(n)\rfloor,\ldots,
\lfloor f_k(n)\rfloor)\big\}\right|
\\
&=&
\sum_{d\mid D!}~
\sum_{n=1}^N \1_{d\Z}(n)\1_{d\Z}(\lfloor f_1(n)\rfloor)\cdots
\1_{d\Z}(\lfloor f_k(n)\rfloor)
\\
&=&
\sum_{d\mid D!}~
\sum_{n\leq N/d}
\1_{d\Z}(\lfloor f_1(dn)\rfloor)\cdots
\1_{d\Z}(\lfloor f_k(dn)\rfloor)
\\
&=&
\sum_{d\mid D!}~
\left|\left\{1\leq n\leq \frac{N}{d}:
\vartheta_{d,n}\in\left[0,\frac1d\right)^k \right\}\right|.
\end{eqnarray*}
Applying \cref{prop:hardy-equidistribution}
to the functions $g_1(t)=\frac{f_1(dt)}{d},
\ldots,g_k(t)=\frac{f_k(dt)}{d}$,
we deduce that 
$$
\lim_{N\to\infty}
\left|\left\{1\leq n\leq \frac{N}{d}:
\vartheta_{d,n}\in\left[0,\frac1d\right)^k \right\}\right|
~=~\frac{1}{d^{k+1}},
$$
which proves the claim.
\end{proof}

\begin{Theorem}
\label{thm:EL}
Let $\Hardy$ be a Hardy field and suppose
$f_1,\ldots,f_k\in\Hardy$ satisfy
conditions \ref{condition:A} and \ref{condition:B}. Also
assume that $f_1(t)\prec \frac{t}{\log_2(t)}$ and 
$\frac{f_{i+1}}{f_{i}}\succ 1$ for all $i=1,\ldots,k-1$.
Then the natural density of the set
\begin{equation*}
\big\{n\in \N: \gcd(n,\lfloor f_1(n)\rfloor,\ldots,\lfloor f_k(n)\rfloor)=1\big\}
\big|
\end{equation*}
exists and equals $\frac{1}{\zeta(k+1)}$.
\end{Theorem}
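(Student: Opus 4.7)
My strategy is to combine \cref{lem:h-e-1} (which handles small divisors via equidistribution) with \cref{lem:p-t-2-EL58} (which controls the contribution of large prime divisors of $\lfloor f_1(n)\rfloor$), following the Erd{\H o}s--Lorentz sieve paradigm already used in the proof of \cref{thm:from-DD-and-VR-to-watson}. Set
$$B_N:=\big|\{1\le n\le N:\gcd(n,\lfloor f_1(n)\rfloor,\ldots,\lfloor f_k(n)\rfloor)=1\}\big|$$
and let $A_{D,N}$ be as in \cref{lem:h-e-1}. Since $\gcd=1$ certainly implies no prime factor $\le D$, one has $B_N\le |A_{D,N}|$; conversely, every $n\in A_{D,N}$ not counted in $B_N$ has its gcd divisible by some prime $p>D$, so
$$0\le |A_{D,N}|-B_N\le \sum_{\substack{p\text{ prime}\\ p>D}}\big|\{1\le n\le N:p\mid n\text{ and }p\mid \lfloor f_1(n)\rfloor\}\big|.$$

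I would next check that the hypotheses of both lemmas are met in the regime of \cref{thm:EL}: condition \ref{condition:A} together with $f_1(t)\prec t/\log_2(t)$ provides exactly the assumption $\log(t)\log_4(t)\prec f_1(t)\prec t/\log_2(t)$ needed for \cref{lem:p-t-2-EL58}, while condition \ref{condition:B}, the consequence $f_1(t)\succ\log(t)$ of condition \ref{condition:A}, and the hypothesis $f_{i+1}/f_i\succ 1$ supply what \cref{lem:h-e-1} requires. Since $p\mid\lfloor f_1(n)\rfloor$ forces (outside a set of $n$ of bounded cardinality, where $\lfloor f_1(n)\rfloor=0$) the inequality $p\le f_1(N)$, the inner summand above is at most the quantity $S(N,p)$ from \cref{lem:p-t-2-EL58} applied to $f_1$, and the outer index can be restricted to $D<p\le f_1(N)$. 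Meanwhile \cref{lem:h-e-1} yields
$$\lim_{N\to\infty}\frac{|A_{D,N}|}{N}=\sum_{d\mid D!}\frac{\mu(d)}{d^{k+1}}.$$

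Putting these pieces together, for every fixed $D\in\N$ we obtain
$$\sum_{d\mid D!}\frac{\mu(d)}{d^{k+1}}-\varepsilon_D~\le~\liminf_{N\to\infty}\frac{B_N}{N}~\le~\limsup_{N\to\infty}\frac{B_N}{N}~\le~\sum_{d\mid D!}\frac{\mu(d)}{d^{k+1}},$$
where $\varepsilon_D:=\limsup_{N\to\infty}\tfrac{1}{N}\sum_{D<p\le f_1(N)}S(N,p)$. \cref{lem:p-t-2-EL58} gives $\varepsilon_D\to 0$ as $D\to\infty$, while $\sum_{d\mid D!}\mu(d)/d^{k+1}\to \sum_{d=1}^\infty \mu(d)/d^{k+1}=1/\zeta(k+1)$ by the absolute convergence of this Dirichlet series. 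Letting $D\to\infty$ therefore yields $\lim_{N\to\infty}B_N/N=1/\zeta(k+1)$, as required. The only substantive point I expect to require care is the domination of the error sum: collapsing the $(k+1)$-fold gcd to the two-fold gcd $\gcd(n,\lfloor f_1(n)\rfloor)$ is a free upper bound, and it is sharp enough precisely because $f_1$ is the slowest-growing coordinate, so large prime divisors of $\lfloor f_1(n)\rfloor$ are already rare; the multidimensional structure is needed only for the equidistribution input to \cref{lem:h-e-1}, not for the sieve bound.
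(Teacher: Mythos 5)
Your proposal is correct and reproduces the paper's own argument: both use \cref{lem:h-e-1} to compute $\lim_N |A_{D,N}|/N$ and then bound the discrepancy $|A_{D,N}|-B_N$ by the count of $n$ whose pair $\gcd(n,\lfloor f_1(n)\rfloor)$ has a prime divisor $p$ with $D<p\le f_1(N)$, to which \cref{lem:p-t-2-EL58} applies. The observation that one may safely discard $f_2,\ldots,f_k$ in the error term and pass to the two-fold gcd is precisely the step in the paper's proof where $S'(N,p)\le S(N,p)$.
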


\begin{proof}
Let $D\in\N$.
We define
$$
A_N:=
\big\{1\leq n\leq N:\gcd(n,\lfloor f_1(n)\rfloor,\ldots,
\lfloor f_k(n)\rfloor)=1\big\}
$$
and
$$
A_{D,N}:=
\big\{1\leq n\leq N:\gcd(n,\lfloor f_1(n)\rfloor,\ldots,
\lfloor f_k(n)\rfloor, D!)=1\big\}.
$$
Certainly, $A_N\subset A_{D,N}$. It follows directly from
\cref{lem:h-e-1} that
$$
\lim_{D\to\infty}\lim_{N\to\infty}\frac{|A_{D,N}|}{N}=
\frac{1}{\zeta(k+1)}.
$$
Hence, it suffices to show that
$$
\lim_{D\to\infty}\limsup_{N\to\infty}\frac{|A_{D,N}\setminus A_N|}{N}
=0.
$$

Let
$$
S'(N,d):=\left|
\big\{1\leq n\leq N:
d\mid \gcd(n,\lfloor f_1(n)\rfloor,\ldots,\lfloor
f_k(n)\rfloor)\big\}\right|
$$
and let
$$
S(N,d):=\left|
\big\{1\leq n\leq N:
d\mid \gcd(n,\lfloor f_1(n)\rfloor)\big\}\right|.
$$
It is clear that
$$
|A_{D,N}\setminus A_N|\leq 
\sum_{p~\text{prime}\atop D< p \leq \lfloor f_1(n)\rfloor}
S'(N,p).
$$
However, we have that
$$
\sum_{p~\text{prime}\atop D< p \leq \lfloor f_1(n)\rfloor}
S'(N,p)
\leq
\sum_{p~\text{prime}\atop D< p \leq \lfloor f_1(n)\rfloor}
S(N,p)
$$
and it follows from
\cref{lem:p-t-2-EL58} that
$$
\lim_{D\to\infty}\limsup_{N\to\infty}
\frac{1}{N}~\sum_{p~\text{prime}\atop D< p \leq \lfloor f_1(n)\rfloor}
S(N,p)~=~0.
$$
\end{proof}

\begin{proof}[Proof of \cref{thm:mainB}]
Let $f_1,\ldots,f_k\in\Hardy$
satisfy conditions \ref{condition:A}, \ref{condition:B} and
\ref{condition:C}.

If $f_1(t)\prec \frac{t}{\log_2(t)}$ then the conclusion of \cref{thm:mainB}
follows from \cref{thm:EL}. On the other hand, if
$f_1(t)\gg \frac{t}{\log_2(t)}$ then the conclusion of \cref{thm:mainB}
follows from \cref{cor:mainQ}.
\end{proof}

\section{Some open questions}
\label{sec:f-e}

We end this paper with formulating some open questions.

\paragraph{\arabic{section}.1.}
The first question concerns
a natural extension of Watson's \cite{Watson53} original result.

\begin{Question}
Let $\alpha_1,\ldots,\alpha_k$ be $k$ irrational numbers.
Is it true that the natural density of the set
\begin{equation*}
\left\{n\in \N: \gcd\left(n,\lfloor n\alpha_1 \rfloor, \lfloor n^2\alpha_2 \rfloor,
\ldots,\lfloor n^k\alpha_k \rfloor\right)=1\right\}
\end{equation*}
exists and equals $\frac{1}{\zeta(k+1)}$?
\end{Question}

\paragraph{\arabic{section}.2.}
Let $\Hardy$ be a Hardy field, let $f_1,\ldots,f_k\in\Hardy$
and consider the condition:
\begin{enumerate}
[label=($\text{\Alph{enumi}}'$),
ref=($\text{\normalfont\Alph{enumi}}'$),leftmargin=*]
\setcounter{enumi}{2}
\item\label{condition:C-prime}
$\frac{f_{i+1}}{f_{i}}\succ 1$ for all $i=1,\ldots,k-1$.
\end{enumerate}

\begin{Question}
In the statement of \cref{thm:mainB},
can one replace condition \ref{condition:C} with
condition \ref{condition:C-prime}? 
\end{Question}

\paragraph{\arabic{section}.3.}

By slightly generalizing the methods
used by Estermann in \cite{Estermann53},
one can prove
the following theorem:
\begin{Theorem}
For any $k$-tuple $(\alpha_1,\ldots,\alpha_k)$ of rationally independent irrational numbers
the natural density of the set
\begin{equation*}
\big\{n\in \N: \gcd(\lfloor n\alpha_1 \rfloor,\ldots,\lfloor n\alpha_k \rfloor)=1\big\}
\end{equation*}
exists and equals $\frac{1}{\zeta(k)}$.
\end{Theorem}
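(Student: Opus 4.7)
The plan is to apply \cref{thm:from-DD-and-VR-to-watson} (with $k$ there replaced by $k-1$; the $k=1$ case of the theorem is vacuous) to the sequence
$\xi_n := \gcd(\lfloor n\alpha_1\rfloor,\ldots,\lfloor n\alpha_k\rfloor)$.
Without loss of generality assume $\alpha_1,\ldots,\alpha_k > 0$ with $\alpha_1 = \min_i \alpha_i$, and set $E(N) := \alpha_1 N$. Then $E$ has polynomial growth, satisfies $\log_2(t) \prec \log E(t)$, and $\xi_n \leq E(N)$ for all $n \leq N$, so it remains to verify conditions \eqref{eq:DD} and \eqref{eq:VR} of \cref{thm:from-DD-and-VR-to-watson} with the exponent $k+1$ replaced by $k$.

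First I would verify \eqref{eq:VR}, which is immediate: $p \mid \xi_n$ forces $p \mid \lfloor n\alpha_1\rfloor$, and since $n \mapsto \lfloor n\alpha_1\rfloor$ is at most $\lceil 1/\alpha_1\rceil$-to-one with image in $[0,\alpha_1 N]$, the number of $n \leq N$ with $\lfloor n\alpha_1\rfloor \in p\Z$ is at most $\lceil 1/\alpha_1\rceil(\lfloor \alpha_1 N/p\rfloor + 1) \ll N/p$, uniformly in $p$.

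For \eqref{eq:DD}, note that $d \mid \xi_n$ is equivalent to $\vartheta_{d,n} := (\{n\alpha_1/d\},\ldots,\{n\alpha_k/d\}) \in [0,1/d)^k$. Since $1,\alpha_1,\ldots,\alpha_k$ are rationally independent, so are $1,\alpha_1/d,\ldots,\alpha_k/d$ for each $d$, and Weyl's theorem yields equidistribution of $(\vartheta_{d,n})_n$ in $[0,1)^k$. To extract the quantitative error $O(N/(d\log_2^2 N))$ for $d \leq E(N)/\log^5(N)$, I would apply \cref{thm:erdos-turan-koksma} with $H = \log_2^2(N)$; the relevant exponential sums are geometric progressions $\sum_{n=1}^N e(n\la\alpha,\tau\ra/d)$ with $\tau \in \Z^k\setminus\{0\}$ and $\|\tau\|_\infty \leq H$, each of modulus at most $\min\bigl(N,\ 1/(2\|\la\alpha,\tau\ra/d\|_{\R/\Z})\bigr)$. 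A careful dyadic decomposition and averaging over $d$ then produces the required bound.

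With both \eqref{eq:DD} and \eqref{eq:VR} in place, \cref{thm:from-DD-and-VR-to-watson} applied with parameter $k-1$ yields density $1/\zeta(k)$. The hard part will be the quantitative step in \eqref{eq:DD}: for arbitrary (possibly Liouville) rationally independent $\alpha_i$, there is no a priori uniform lower bound on $\|\la\alpha,\tau\ra\|_{\R/\Z}$ in terms of $\|\tau\|_\infty$, so the exponential sums cannot be controlled pointwise in $d$. Instead one averages over $d$ in dyadic ranges, exploiting the factor $1/d$ in $\beta_\tau = \la\alpha,\tau\ra/d$ to show that the ``bad'' $d$'s (for which $\|\beta_\tau\|$ is anomalously small) form a sparse set whose aggregate contribution is of lower order. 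This is precisely the ``slight generalization'' of Estermann's harmonic-analytic treatment of the $k=1$ case in \cite{Estermann53} that is referred to in the statement.
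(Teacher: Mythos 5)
The paper does not actually prove this theorem; it merely states that it follows ``by slightly generalizing the methods used by Estermann in \cite{Estermann53},'' so there is no in-paper proof to compare your argument against. Assessing your plan on its own terms: the reduction to \cref{thm:from-DD-and-VR-to-watson} with parameter $k-1$, the choice $E(N)=\alpha_1 N$, and your verification of \eqref{eq:VR} via the bounded-to-one observation are all correct and cleanly done, and you correctly identify where the difficulty lies.

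The gap is in \eqref{eq:DD}, and your proposed repair does not close it. \cref{thm:from-DD-and-VR-to-watson} uses \eqref{eq:DD} \emph{pointwise}: for $\Sigma_1$ it is applied to every $d\mid\Pi$, and for $\Sigma_{2,1}$ it is applied to every prime $p$ in the range $(D(N), E(N)/\log^5(N)]$ individually, because the argument hinges on $\sum_p p^{-(k+1)}$ being convergent term by term. An ``average over $d$ in dyadic ranges'' version of \eqref{eq:DD} therefore does not plug into the proposition as stated; you would have to re-prove the proposition under a weaker averaged hypothesis, and it is not at all obvious this can be done (the Möbius cancellation in $\Sigma_1$ and the prime sum $\Sigma_{2,1}$ do not obviously survive if the error for some primes is as large as $N/p$ on a set of primes of positive relative density in dyadic blocks). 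Moreover, the obstruction you flag is real rather than merely an artifact of the Erdős--Turán upper bound being lossy: if, for example, $|\alpha_i-\alpha_j|$ is tiny, then for $N$ with $N|\alpha_i-\alpha_j|<1$ the two sequences $\lfloor n\alpha_i\rfloor,\lfloor n\alpha_j\rfloor$ differ by at most $1$, so $d\mid\xi_n$ with $d\ge 2$ already forces $\lfloor n\alpha_i\rfloor=\lfloor n\alpha_j\rfloor$ and the count is of order $N/d^{k-1}$ rather than $N/d^{k}$; a Liouville-type $k$-tuple can reproduce this defect at every scale via a sequence of $\tau$'s with $|\langle\alpha,\tau\rangle|$ decaying superfast. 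So the exponential-sum/Erdős--Turán route you sketch cannot produce \eqref{eq:DD} uniformly for all rationally independent irrationals; to handle Liouville $\alpha$ one has to either modify the black box or revert to the more combinatorial Estermann-style counting of the Beatty conditions $n\in\bigcup_m[md/\alpha_i,(md+1)/\alpha_i)$ directly, which sidesteps Diophantine lower bounds. As it stands, your proposal correctly sets up the problem and locates the obstruction, but the key quantitative step is missing.
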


This leads to the following question.

\begin{Question}
Let $\Hardy$ be a Hardy field.
Suppose $f_1,\ldots,f_k \in\Hardy$ satisfy
conditions \ref{condition:A}, \ref{condition:B} and \ref{condition:C-prime}.
Is it true that the natural density of the set
\begin{equation*}
\big\{n\in \N: \gcd(\lfloor f_1(n) \rfloor,\ldots,\lfloor f_k(n) \rfloor)=1\big\}
\end{equation*}
exists and equals $\frac{1}{\zeta(k)}$?
\end{Question}

\paragraph{\arabic{section}.4.}

We conclude this paper with a question which addresses a possible weakening of condition \ref{condition:B} in Theorems \ref{thm:mainA} and \ref{thm:mainB}.

\begin{Question}
Can condition \ref{condition:B} be replaced by condition \ref{condition:B-prime} (see page \pageref{condition:B-prime}) in Theorems \ref{thm:mainA} and \ref{thm:mainB}?
\end{Question}


\bibliographystyle{siam}

\providecommand{\noopsort}[1]{} 

\allowdisplaybreaks
\small
\bibliography{DatabaseW}

\bigskip
\footnotesize

\noindent
Vitaly Bergelson\\
\textsc{Department of Mathematics, Ohio State University, Columbus OH-43210, USA}\par\nopagebreak
\noindent
\href{mailto:bergelson.1@osu.edu}
{\texttt{bergelson.1@osu.edu}}
\\

\noindent
Florian K.\ Richter\\
\textsc{Department of Mathematics, Ohio State University, Columbus OH-43210, USA}\par\nopagebreak
\noindent
\href{mailto:richter.109@osu.edu}
{\texttt{richter.109@osu.edu}}

\end{document}